\theoremstyle{plain}
\newtheorem{theor}{Theorem}[section]
\newtheorem*{theorA}{Theorem~A}
\newtheorem{lemma}[theor]{Lemma}
\theoremstyle{remark}
\def\R{{\mathbb R}}
\def\Z{{\mathbb Z}}
\def\Event{{\mathcal E}}
\def\Proj{{\rm Proj}}
\def\Prob{{\mathbb P}}
\def\Exp{{\mathbb E}}
\def\dist{{\rm dist}}
\def\spn{{\rm span}}
\def\col{{\rm col}}
\def\supp{{\rm supp\,}}
\def\crosspol{{\mathcal K}}
\def\conv{{\rm conv\,}}
\def\Net{{\mathcal N}}
\def\Gluskin{{\mathcal G}}
\def\mtrx{{\mathcal A}}
\def\Fone{{\mathcal F_1}}
\def\Ftwo{{\mathcal F_2}}
\def\Fcomp{{\mathcal F}}
\def\BM{{\rm d_{BM}}}
\def\Mink{{\mathcal M_n}}
\def\idmat{{\rm Id_n}}
\title{On the Banach--Mazur distance to cross-polytope}
\author{Konstantin Tikhomirov\footnote{Princeton University, NJ; email: kt12@math.princeton.edu.
The research is partially supported by the Simons Foundation.}}
\begin{document}

\maketitle

\begin{abstract}
Let $n\geq 3$, and let $B_1^n$ be the standard $n$--dimensional cross-polytope (i.e.\ the convex hull of standard coordinate vectors
and their negatives). We show that there exists a symmetric convex body $\Gluskin_m$ in $\R^n$ such that
the Banach--Mazur distance $\BM(B_1^n,\Gluskin_m)$ satisfies $\BM(B_1^n,\Gluskin_m)\geq n^{5/9}\log^{-C}n$,
where $C>0$ is a universal constant.
The body $\Gluskin_m$ is obtained as a typical realization of a random polytope in $\R^n$ with
$2m:=2n^C$ vertices (for a large constant $C$). The result improves upon an earlier estimate of S.~Szarek which gives
$\BM(B_1^n,\Gluskin_m)\geq c n^{1/2}\log n$ (with a different choice of $m$).
This shows in a strong sense that the cross-polytope
(or the cube $[-1,1]^n$) cannot be an ``approximate'' center of the Minkowski compactum.
\end{abstract}

\section{Introduction}

The Minkowski (or the Banach--Mazur) compactum $\Mink$ is defined as the collection of all origin-symmetric $n$--dimensional
convex bodies equipped with the distance function
$$\BM(K,L):=\inf\big\{d\geq 1:\;\exists T\in {\rm GL_n(\R)}\mbox{ such that }K\subset T(L)\subset dK\big\}$$
(in this note, we do not consider non-symmetric bodies).
The classical theorem of F.~John \cite{John} asserts that $\BM(K,B_2^n)\leq\sqrt{n}$
for all $K\in\Mink$, where $B_2^n$ is the standard Euclidean ball.
The question of estimating $\sup\limits_{K\in\Mink}\BM(K,B_\infty^n)$
(or, equivalently, $\sup\limits_{K\in\Mink}\BM(K,B_1^n)$) has attracted considerable attention of researchers.
Here and in what follows, by $B_p^n$ ($1\leq p\leq \infty$) we denote the unit ball of space $\ell_p^n$;
in particular, $B_\infty^n=[-1,1]^n$ and $B_1^n=\conv\{\pm e_1,\dots,\pm e_n\}$,
where $e_1,e_2,\dots,e_n$ is the standard vector basis in $\R^n$.

Currently best upper bound for the quantity $\sup\limits_{K\in\Mink}\BM(K,B_1^n)$
is $Cn^{5/6}$ due to A.~Giannopoulos \cite{Gian}
(we refer to earlier results of J.~Bourgain--S.~Szarek \cite{BoSz} and
S.~Szarek--M.~Talagrand \cite{SzTa} giving upper estimates $n\exp(-c\sqrt{\log n})$ and $Cn^{7/8}$,
respectively,
as well as to a result of P.~Youssef \cite{Youssef} for estimate with improved constant $2n^{5/6}$).
The connection of the problem
with the property of {\it restricted invertibility} of matrices and the proportional Dvoretzky--Rogers factorization
has been intensively explored in literature; we refer, among others, to papers
\cite{BoSz, SzTa, Gian, Youssef, FrYoussef, NaorYoussef, NaorYoussef2}.

For the lower bound, the only available result up to this writing, due to S.~Szarek,
asserts that $\sup\limits_{K\in\Mink}\BM(K,B_1^n)\geq c\sqrt{n}\log n$ for a universal constant $c>0$
\cite{Szarek}.
The proof of the lower estimate in \cite{Szarek} involves two crucial ingredients.
First is construction of
a family of random polytopes which, together with an $\varepsilon$--net
argument and some probabilistic relations, reduces
the problem to studying relative positions of independent Gaussian vectors in $\R^n$.
In a different form, such construction was first used in context of
geometric functional analysis by E.~Gluskin \cite{Gluskin}
and allowed him to solve a crucial problem of estimating the diameter of the Minkowski compactum
(we refer to survey \cite{MT} for more information; see also \cite{LRTJ} for more recent applications of Gluskin's construction).
The second element of Szarek's proof is an estimate of singular values of the standard Gaussian matrix
which can be viewed as a non-asymptotic analog of the Marchenko--Pastur law \cite{MP} for the spectrum
of sample covariance matrices, with very strong probability bounds.

The result of S.~Szarek shows that the cross-polytope (or the cube) cannot be a ``center'' of the Minkowski compactum
similar to the Euclidean ball. However, the result leaves open a possibility that
the distance of any convex body to the cube is bounded above by the
square root of dimension times a polylogarithmic multiple.
A conjecture of A.~Naor (personal communication)
that the bound obtained in \cite{Szarek} is suboptimal, is confirmed in the main result of this note
(see also \cite[remark~2]{SzTa} for related discussion).
\begin{theorA}
Let $n\geq 3$ and let $m:=n^3$. Further, let $\Gluskin_m$ be the random symmetric polytope constructed as
the convex hull of $2m$ vectors
$\pm G_1,\pm G_2,\dots,\pm G_m$ where $G_1,G_2,\dots,G_m$ are independent standard Gaussian ($N(0,\idmat)$)
vectors in $\R^n$.
Then with positive probability $\BM(\Gluskin_m,B_1^n)\geq n^{5/9}\log^{-C}n$.
Here, $C>0$ is a universal constant.
\end{theorA}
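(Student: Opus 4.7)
The plan is to follow the Gluskin--Szarek template and prove the contrapositive: with positive probability, no $T\in GL_n(\R)$ realises $\BM(\Gluskin_m,B_1^n)\le d$ for $d:=n^{5/9}\log^{-C}n$. The first step is to unpack the definition: the inequality is equivalent to the existence of $T\in GL_n(\R)$ with $Te_j\in\Gluskin_m$ for every $j\in[n]$ and $\|T^{-1}G_i\|_1\le d$ for every $i\in[m]$. Applying Carath\'eodory's theorem column by column and extracting a basis among the vectors witnessing the first condition, any admissible $T$ decomposes as $T=G_SA$ with $S\subset[m]$, $|S|=n$, $\{G_i\}_{i\in S}$ a basis of $\R^n$, and $A\in GL_n(\R)$; specialising the second condition to $i\in S$ forces $\|A^{-1}\|_{1\to 1}\le d$, which combined with a John-type upper bound on $\|A\|$ coming from $Te_j\in\Gluskin_m$ pins $A$ to a bounded subset of $GL_n(\R)$.

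Next I would union-bound over an $\varepsilon$--net. The number of choices for $S$ is at most $\binom{m}{n}\le\exp(Cn\log n)$, and for each $S$ a standard volumetric argument produces a net $\Net_S$ of admissible matrices $A$ of cardinality $|\Net_S|\le\exp(Cn^2\log n)$. Fix a pair $(S,A_0)$ from the net and condition on $G_S$. The remaining $m-n\gtrsim n^3$ vectors $\{G_i\}_{i\notin S}$ are then independent standard Gaussians, independent of $G_S$, so the conditional probability that they \emph{all} satisfy $\|T^{-1}G_i\|_1\le 2d$ with $T=G_SA_0$ factorises as $p(T)^{m-n}$, where
\[
p(T):=\Prob_{g\sim N(0,\idmat)}\bigl[\|T^{-1}g\|_1\le 2d\bigr].
\]
Closing the union bound thus reduces, after balancing the net log-cardinality $O(n^2\log n)$ against $(m-n)\log(1/p(T))$, to establishing the quantitative small-ball estimate $\log(1/p(T))\gtrsim (\log n)/n$ uniformly over admissible $T$.

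The main obstacle, and the source of the exponent $5/9$, is the small-ball estimate itself. The plan is to invoke Marchenko--Pastur-type concentration for the singular values of $G_S$ (along the lines of \cite{Szarek}, exploiting that $m=n^3$ provides a vast overcomplete family of candidate bases and one can assume $G_S$ well-conditioned, with singular values of order $\sqrt{n}$) in order to pass to the equivalent small-ball estimate $\Prob_{g'}[\|A_0^{-1}g'\|_1\le 2d\sqrt{n}]$ for $g'\sim N(0,\idmat)$. Combining the constraint $\|A_0^{-1}\|_{1\to 1}\le d$ on the column $\ell_1$--norms of $A_0^{-1}$ with a Hadamard-type lower bound on its row $\ell_2$--norms (coming from $|\det A_0^{-1}|$), and applying Gaussian anti-concentration after splitting the rows into ``heavy'' and ``light'' blocks, would yield the required bound precisely at the threshold $d\asymp n^{5/9}\log^{-C}n$. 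Any quantitative loss in the anti-concentration step or in the use of Marchenko--Pastur erodes the exponent $5/9$ directly, so the hardest step is producing a small-ball bound that is sharp up to polylogarithmic factors; this is where the improvement over Szarek's $\sqrt{n}\log n$ bound must come from.
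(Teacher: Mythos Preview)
Your outline reproduces Szarek's argument rather than improving on it, and the place where you need a new idea --- the small-ball estimate --- is left as an assertion. Two concrete problems:

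\medskip

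\textbf{The small-ball step is the whole theorem, and your plan for it is Szarek's plan.} You write that you will ``invoke Marchenko--Pastur-type concentration for the singular values of $G_S$ (along the lines of \cite{Szarek})'' and then combine $\|A_0^{-1}\|_{1\to1}\le d$ with a Hadamard bound and a heavy/light row split. But the singular-value route is exactly what gives Szarek his $\sqrt{n}\log n$; the paper says explicitly that ``instead of working with singular values we develop a combination of geometric and probabilistic arguments to `directly' estimate the Gaussian measure of cross-polytopes.'' Nothing in your sketch explains why the exponent should jump from $1/2$ to $5/9$. The paper's mechanism is genuinely different: each net matrix $A\in\Net$ (which is $m\times n$, not $n\times n$) is split columnwise into a very sparse part $\Fone(A)$ (supports of size $\le 1/\alpha$) and a small-$\ell_2$ part $\Ftwo(A)$ (columns of norm $<\sqrt{\alpha}$). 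The sparse part admits a small net; the small-norm part forces the corresponding vertices of the random cross-polytope to sit close to the span of the others. This dichotomy, together with the permutation-invariant class $\crosspol(k,h)$ and the rescaling trick in Lemma~\ref{l: crosspol1} (replace the last $k$ generators $x_i$ by $(k/n)x_i$ and use that the $\crosspol$ condition survives \emph{every} reordering), is what produces a Gaussian-measure bound under the weak hypothesis $h\le cn$ rather than $h\le ck$. Your heavy/light split of the \emph{rows of $A_0^{-1}$} is not the same object and does not obviously yield anything beyond $\sqrt{n}$.

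\medskip

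\textbf{The reduction to $|S|=n$ discards the structure the paper exploits.} By Carath\'eodory, $Te_j\in\Gluskin_m$ gives $Te_j=\Gamma\,\col_j(A)$ with $|\supp\col_j(A)|\le n$ and $\|\col_j(A)\|_1\le 1$; the total support can be of size $n^2$. You compress this to $T=G_SA$ with $|S|=n$, which is linearly correct but destroys the $\ell_1$/support constraints on the columns of the original $m\times n$ matrix --- precisely the constraints that make the $\alpha$-decomposition work. Moreover, the remark that ``$m=n^3$ provides a vast overcomplete family of candidate bases and one can assume $G_S$ well-conditioned'' is backwards for a union bound: the putative $T$ gets to pick $S$, so you must control the \emph{worst} $S$, not a typical one. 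With $\binom{m}{n}$ choices of $S$, many realizations of $G_S$ will be badly conditioned, and you have not said how you rule those out.

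\medskip

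In short, the scaffolding (net, conditioning on the support, factorised probability) is fine and matches Section~\ref{s: setup} and Lemma~\ref{l: decomposition}, but the engine that drives the exponent past $1/2$ --- the $\alpha$-decomposition of Section~\ref{s: dd}, the event $\Event_\cap$ of Section~\ref{s: event}, and above all Lemma~\ref{l: crosspol1} --- is missing from your proposal.
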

The starting point of our proof is the same as in the S.~Szarek's work \cite{Szarek}: we construct Gluskin's random polytope
$\Gluskin_m$
to estimate its Banach--Mazur distance to an $n$--dimensional cross-polytope.
However, instead of working with singular values we develop a combination of geometric
and probabilistic arguments to ``directly'' estimate the Gaussian measure
of cross-polytopes inscribed into a given realization of Gluskin's polytope.

The structure of the paper is the following: in Section~\ref{s: setup}
we recall those elements of Szarek's construction \cite{Szarek} that are also used in the present paper.
In Section~\ref{s: dd} we give a high-level overview of our strategy,
which comprises two essential parts:
constructing a special event of probability close to one which catches the geometric properties useful for us
and estimating the Gaussian measure of cross-polytopes from a special class.
The event is constructed in Section~\ref{s: event}, while the Gaussian measure of cross-polytopes
is computed in Section~\ref{s: crosspol}.
Finally, in Section~\ref{s: completion} we choose parameters and complete the proof.

\section{Preliminaries}\label{s: setup}

Let us start with basic notation. Given a finite set $I$, we denote its cardinality by $|I|$.
The convex hull of a set of points $S$ in a linear space is denoted by $\conv(S)$.
Given a matrix $A$, its columns are denoted by $\col_1(A),\col_2(A),\dots$
Given a vector $v$, let $\supp v$ be its support.
For two convex bodies $K$ and $L$, $K+L$ denotes their Minkowski sum.
Further, for any set $K$ in $\R^n$ and an $m\times n$ matrix $A$, by $A(K)$ we denote
the linear image of $K$ in $\R^m$.
Let $(\Omega,\Prob)$ be a probability space.
For any random variable/vector/matrix $\xi$ on $\Omega$, by $\xi(\omega)$ we denote
the realization of $\xi$ at a point $\omega\in\Omega$.

We introduce several global parameters:
\begin{equation*}
\mbox{A large integer $n$; a number $m\geq n^2$; $\varepsilon\in(0,1/2]$ and $\rho\geq 1$.}
\end{equation*}
In what follows, $m$ will be responsible for the geometry of our random polytope
(we will take $2m$ to be the number of generating vectors in Gluskin's polytope in $\R^n$);
$\varepsilon$ will be used to define a net (discretization) on a set of linear operators;
$\rho$ will serve as a lower bound for the Banach--Mazur distance of the Gluskin polytope to a cross-polytope.

We will impose additional assumptions on the parameters in various statements below;
the parameters will be explicitly chosen at the end of the note
at the optimization stage.

Let $G_i$, $i\leq m$ be jointly independent standard Gaussian vectors in $\R^n$,
and $\Gamma$ be the $n\times m$ standard Gaussian matrix with columns $G_i$.
We define the random symmetric convex body $\Gluskin_m$ as the convex hull of $\pm G_i$, $i\leq m$.
These random convex polytopes, introduced to high-dimensional convex geometry by E.~Gluskin \cite{Gluskin}
(the original definition was slightly different),
turned out extremely useful.

\subsection{Discretization}

All observations in this subsection, up to minor modifications, repeat those from \cite{Szarek}.

\medskip

To show that there is a symmetric convex set with the Banach--Mazur distance to $B_1^n$
at least $\rho$, it is sufficient to show that
\begin{equation}\label{eq: basis}
\Prob\big\{\mbox{There is a symmetric cross-polytope $P$ with $\rho\,\Gluskin_m\supset\rho P\supset \Gluskin_m$}\big\}<1.
\end{equation}
By Caratheodory's theorem, for a given point $\omega\in\Omega$
of the probability space, any cross-polytope $P$ inscribed into the realization $\Gluskin_m(\omega)$,
can be represented as $P=\Gamma(\omega) A(B_1^n)$
for an $m\times n$ matrix $A$ (determined by $\omega$) such that the cardinality of support of every column of $A$ is at most $n$
and the $\ell_1^n$--norm of each column at most one.
Define the class $\mtrx_{m,n}$ of matrices $A$ in $\R^{m\times n}$ satisfying the conditions
$$|\supp\col_i(A)|\leq n;\quad\quad \|\col_i(A)\|_1\leq 1;\quad\quad i\leq n.$$

With this definition, \eqref{eq: basis} would follow as long as we show that
the event
$$\Event_{*}:=\big\{\mbox{There is $A\in\mtrx_{m,n}$ such that $\rho\, \Gamma A(B_1^n)\supset \Gluskin_m$}\big\}$$
has probability strictly less than one.

Further, define a discretization of $\mtrx_{m,n}$ as follows. Let $\Net\subset\mtrx_{m,n}$
be the set of all $m\times n$ matrices $A=(a_{ij})$ in $\mtrx_{m,n}$ such that
$\varepsilon^{-1}a_{ij}\in\Z$ for all indices $i,j$.
It is easy to see that for any $A=(a_{ij})\in\mtrx_{m,n}$ there is $A'=(a_{ij}')\in\Net$ such that $\max\limits_{i,j}|a_{ij}-a_{ij}'|< \varepsilon$.

We have the following elementary lemma:
\begin{lemma}\label{l: basic discretization}
Let parameters $m,n,\rho,\varepsilon$ satisfy the additional assumptions
$m\leq n^{10}$ and $\varepsilon\rho n^2\leq 1$.
Denote by $\Event_{\text{\tiny\ref{l: basic discretization}}}$ the event
$$\Event_{\text{\tiny\ref{l: basic discretization}}}
:=\big\{\mbox{There is $A\in\Net$ such that $2\rho\, \Gamma A(B_1^n)\supset \Gluskin_m$}\big\}.$$
Then $\Prob(\Event_{*})\leq \Prob(\Event_{\text{\tiny\ref{l: basic discretization}}})+2^{-n}$.
\end{lemma}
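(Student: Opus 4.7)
The plan is to establish the deterministic inclusion $\Event_*\subset\Event_{\text{\tiny\ref{l: basic discretization}}}$ on the whole probability space, which immediately yields the claimed bound (indeed, without the $2^{-n}$ slack that the statement generously allows). The only nontrivial input beyond basic linear algebra is the standard ``geometric iteration'' principle: if $K$ is a bounded symmetric convex set, $L$ another symmetric convex set, and $K\subset L+\alpha K$ with $\alpha\in[0,1)$, then $K\subset(1-\alpha)^{-1}L$.

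Fix $\omega\in\Event_*$ and pick $A\in\mtrx_{m,n}$ with $\Gluskin_m\subset\rho\,\Gamma A(B_1^n)$; by the entrywise rounding remark preceding the lemma, choose $A'\in\Net$ with $\|A-A'\|_{\max}<\varepsilon$, and set $E:=A-A'$. Since each column of $A$ and of $A'$ has support of size at most $n$, each column of $E$ is supported on at most $2n$ indices and has entries of magnitude below $\varepsilon$, giving $\|\col_j(E)\|_1<2n\varepsilon$ for every $j$. For any $x\in B_1^n$ this forces $\|Ex\|_1<2n\varepsilon$, i.e.\ $E(B_1^n)\subset 2n\varepsilon\,B_1^m$; applying $\Gamma$ and using the identification $\Gamma(B_1^m)=\Gluskin_m$ yields $\Gamma E(B_1^n)\subset 2n\varepsilon\,\Gluskin_m$.

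Writing $\Gamma Ax=\Gamma A'x+\Gamma Ex$ for every $x\in B_1^n$ gives the pointwise inclusion $\rho\,\Gamma A(B_1^n)\subset\rho\,\Gamma A'(B_1^n)+2n\varepsilon\rho\,\Gluskin_m$, which combined with $\Gluskin_m\subset\rho\,\Gamma A(B_1^n)$ produces
$$\Gluskin_m\subset\rho\,\Gamma A'(B_1^n)+\alpha\,\Gluskin_m,\qquad \alpha:=2n\varepsilon\rho.$$
The hypothesis $\varepsilon\rho n^2\leq 1$ forces $\alpha\leq 2/n\leq 1/2$ (for $n$ large), so the iteration principle gives $\Gluskin_m\subset(1-\alpha)^{-1}\rho\,\Gamma A'(B_1^n)\subset 2\rho\,\Gamma A'(B_1^n)$, placing $\omega$ in $\Event_{\text{\tiny\ref{l: basic discretization}}}$.

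I do not foresee any serious obstacle---the result is a textbook $\varepsilon$-net reduction. The only moderately delicate step is the geometric-iteration principle itself, which is verified by writing $x=y_0+\alpha y_1+\alpha^2 y_2+\cdots$ with $y_k\in L$ and using boundedness of $K$ to send the tail $\alpha^N x_N$ to zero. I also note that the hypothesis $m\leq n^{10}$ plays no role in the argument above, so the author's own proof likely takes a somewhat different route (perhaps involving a probabilistic bound on $\|\Gamma\|$), which would account for the $2^{-n}$ safety term built into the statement.
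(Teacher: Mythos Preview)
Your argument is correct and, in fact, cleaner than the paper's. The key difference is where you absorb the rounding error: you observe that $\Gamma(B_1^m)=\Gluskin_m$, so the error term $\Gamma E(B_1^n)$ lands in $2n\varepsilon\,\Gluskin_m$ and can be fed straight back into the iteration $\Gluskin_m\subset \rho\,\Gamma A'(B_1^n)+\alpha\,\Gluskin_m$. The paper instead bounds $\Gamma E(B_1^n)$ by a Euclidean ball of radius $\varepsilon n\max_j\|G_j\|_2$, then needs the inradius $r$ of $\Gluskin_m$ to compare that ball with $\Gamma A(B_1^n)$; this forces probabilistic control of $\max_j\|G_j\|_2$ and of $s_{\min}(\Gamma^T)$, which is precisely where the hypothesis $m\leq n^{10}$ and the $2^{-n}$ slack enter. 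Your route avoids all of this, yielding the deterministic inclusion $\Event_*\subset\Event_{\text{\tiny\ref{l: basic discretization}}}$ and rendering both the $2^{-n}$ and the bound on $m$ unnecessary, exactly as you suspected. One cosmetic remark: if you also impose $\supp\col_j(A')\subset\supp\col_j(A)$ when rounding (which is free), the factor $2n$ becomes $n$ and the constraint $\alpha\leq 1/2$ holds already for $n\geq 2$, removing the ``for $n$ large'' caveat.
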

\begin{proof}
Assume that the difference $\Event_{*}\setminus \Event_{\text{\tiny\ref{l: basic discretization}}}$
is non-empty, and fix any $\omega\in\Event_{*}\setminus \Event_{\text{\tiny\ref{l: basic discretization}}}$.
Let $r\geq 0$ be the largest real number such that $r B_2^n\subset \Gluskin_m(\omega)$,
and let $A=A(\omega)=(a_{ij})$ be a matrix in $\mtrx_{m,n}$ such that
$\rho\, \Gamma (\omega)A(\omega)(B_1^n)\supset \Gluskin_m(\omega)$.
Take $A'=(a_{ij}')\in\Net$ with $\max\limits_{i,j}|a_{ij}-a_{ij}'|< \varepsilon$ and $\supp\col_i(A')\subset\supp\col_i(A)$, $i\leq n$.
It is not difficult to see that
$\Gamma (\omega)A'(B_1^n)+\varepsilon n\max\limits_{j\leq m}\|G_j(\omega)\|_2\,B_2^n\supset \Gamma (\omega)A(B_1^n)$.
Indeed, take any $x\in \Gamma (\omega)A(B_1^n)$; then $x=\sum_{i=1}^n \alpha_i \Gamma(\omega)\col_i(A)$ for some
$(\alpha_1,\dots,\alpha_n)\in B_1^n$. Set $y:=\sum_{i=1}^n \alpha_i \Gamma(\omega)\col_i(A')$
and observe that $\|\Gamma(\omega)\col_i(A'-A)\|_2\leq \varepsilon n\max\limits_{j\leq m}\|G_j(\omega)\|_2$, $i\leq n$.
Hence, $\|x-y\|_2\leq \varepsilon n\max\limits_{j\leq m}\|G_j(\omega)\|_2$, so
$x\in \Gamma (\omega)A'(B_1^n)+\varepsilon n\max\limits_{j\leq m}\|G_j(\omega)\|_2\,B_2^n$.

The above inclusion, together with the definition of $r$, gives
$$\Gamma (\omega)A'(B_1^n)+\varepsilon n\max\limits_{j\leq m}\|G_j(\omega)\|_2\,\frac{\rho}{r}\, \Gamma (\omega)A(B_1^n)
\supset \Gamma (\omega)A(B_1^n).$$
This implies that $\varepsilon n\max\limits_{j\leq m}\|G_j(\omega)\|_2\,\frac{\rho}{r}> \frac{1}{2}$,
since otherwise we would have $\Gamma (\omega)A'(B_1^n)\supset \frac{1}{2}\Gamma (\omega)A(B_1^n)$,
which is impossible as $\omega\notin \Event_{\text{\tiny\ref{l: basic discretization}}}$.
Next, observe that the definition of $r$ implies that there is $u\in S^{n-1}$ such that
$|\langle u,G_i(\omega)\rangle|\leq r$ for all $i\leq m$ whence $s_{\min}(\Gamma(\omega)^T )\leq r\sqrt{m}$.

Summarizing, we showed that the difference $\Event_{*}\setminus \Event_{\text{\tiny\ref{l: basic discretization}}}$
is contained in the event
$$\Big\{
\varepsilon n\max\limits_{j\leq m}\|G_i\|_2\,\frac{\rho\sqrt{m}}{s_{\min}(\Gamma^T )}> \frac{1}{2}
\Big\}.$$
Standard, by now, concentration properties for $\|G_i\|_2$ and $s_{\min}(\Gamma^T)$
(see, for example, \cite[Theorem~II.6 and Theorem~II.13]{DS}), together with our assumptions on parameters,
imply the result.
\end{proof}

\subsection{Distances to random linear spans}

The next lemma is an elementary application of standard concentration results for Gaussian
variables. We provide the proof for Reader's convenience.
Recall that $\Gamma$ is the ``global object'' of the proof defined as the $n\times m$ standard Gaussian matrix
with columns $G_i$, $i\leq m$.

\begin{lemma}[Distances to linear spans]\label{l: dist to spans I}
There are universal constants $C_{\text{\tiny\ref{l: dist to spans I}}},c_{\text{\tiny\ref{l: dist to spans I}}}>0$
with the following property.
Assume that $n\geq C_{\text{\tiny\ref{l: dist to spans I}}}$, $n/2\leq u\leq n$, $1\leq k\leq u/2$, $\tau\geq C_{\text{\tiny\ref{l: dist to spans I}}}$,
$\delta\in(1/k,1]$;
and fix any $m\times u$ non-random matrix $B$
of full rank $u$
such that each column has Euclidean norm at most one.
Denote $H_i:=\Gamma (\col_i(B))$, $i\leq u$.
Further, for any permutation $\sigma$ of $[u]$ let $\Event_\sigma$ be the event that
$$\big|\big\{i:\,u-k+1\leq i\leq u,\;\dist(H_{\sigma(i)},\spn\{H_{\sigma(j)},\;j\leq i-1\})\leq \tau\sqrt{n-u+k}\big\}\big|\geq (1-\delta)k.$$
Then $\Prob(\Event_\sigma)\geq 1-e^{-c_{\text{\tiny\ref{l: dist to spans I}}}\tau^2 \delta(n-u+k)k}$ and, moreover,
$\Prob(\bigcap\limits_{\sigma\in\Pi_u}\Event_\sigma)\geq 1-u^k\,e^{-c_{\text{\tiny\ref{l: dist to spans I}}}\tau^2 \delta(n-u+k)k}$,
where $\Pi_u$ is the set of all permutations on $[u]$.
\end{lemma}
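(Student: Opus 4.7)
The plan is to reduce $\dist(H_{\sigma(i)},V_i)$ to the norm of an independent Gaussian vector via a conditional Gram--Schmidt style decomposition, bound a single ``bad'' event by Gaussian Lipschitz concentration, control the number of simultaneously bad indices by a martingale Chernoff estimate, and finally apply a union bound over the ordered $k$-tuples that actually determine $\Event_\sigma$.

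Fix $\sigma\in\Pi_u$ and write $b_i:=\col_{\sigma(i)}(B)\in\R^m$, $V_i:=\spn\{\Gamma b_j:j<i\}\subset\R^n$, and let $\mathcal F_{i-1}$ be the $\sigma$-algebra generated by $H_{\sigma(1)},\dots,H_{\sigma(i-1)}$. Decompose $b_i=b_i'+b_i''$ with $b_i'$ the orthogonal projection of $b_i$ onto $\spn\{b_j:j<i\}$. Because $\Gamma$ has i.i.d.\ Gaussian entries and $\langle b_i'',b_j\rangle=0$ for $j<i$, the vector $\Gamma b_i''$ is distributed as $\|b_i''\|_2\,g$ with $g\sim N(0,\idmat)$ and is independent of $(\Gamma b_j)_{j<i}$, and hence of $\mathcal F_{i-1}$. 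Since $\Gamma b_i'\in V_i$,
$$\dist(H_{\sigma(i)},V_i)=\|P_{V_i^\perp}\Gamma b_i''\|_2,$$
which, conditionally on $V_i$, is distributed as $\|b_i''\|_2\,\chi_{d_i}$ with $d_i=n-\dim V_i$. The full column rank of $B$ together with $u\leq n$ ensure that $\Gamma b_1,\dots,\Gamma b_u$ are almost surely linearly independent, so $\dim V_i=i-1$ a.s.\ and, for $i\geq u-k+1$, $d_i\leq n-u+k$. Combined with $\|b_i''\|_2\leq 1$ and standard Gaussian Lipschitz concentration for the Euclidean norm (noting $\Exp\,\chi_{d_i}\leq\sqrt{n-u+k}$), this yields, for $\tau$ above a universal constant,
$$\Prob\bigl(\dist(H_{\sigma(i)},V_i)>\tau\sqrt{n-u+k}\,\big|\,\mathcal F_{i-1}\bigr)\leq p:=\exp\bigl(-c_0\tau^2(n-u+k)\bigr)\text{ a.s.}$$

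Let $X_i$ be the indicator of the above event for $i=u-k+1,\dots,u$ and set $S:=\sum_i X_i$. Conditionally dominating each moment generating function via $\Exp[e^{\lambda X_i}\mid\mathcal F_{i-1}]\leq 1+p(e^\lambda-1)\leq e^{p(e^\lambda-1)}$ and iterating gives $\Exp\,e^{\lambda S}\leq\exp(kp(e^\lambda-1))$; choosing $\lambda=\log(\delta/p)$ (which is positive and large, since the hypotheses $\delta\geq 1/k$ and $\tau\geq C_{\text{\tiny\ref{l: dist to spans I}}}$ force $\log(\delta/p)\geq c\tau^2(n-u+k)$) produces the Chernoff bound
$$\Prob(S>\delta k)\leq\exp\bigl(-c_1\delta k\log(\delta/p)\bigr)\leq\exp\bigl(-c_2\tau^2\delta(n-u+k)k\bigr),$$
which is precisely the first assertion. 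For the second, observe that $\Event_\sigma$ depends on $\sigma$ only through the ordered $k$-tuple $(\sigma(u-k+1),\dots,\sigma(u))$ (the complementary unordered set is determined by it, and the subspaces $V_i$ for $i\geq u-k+1$ are built from those data), and such tuples number at most $u^k$; a union bound over this reduced collection closes the argument.

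The only genuine care-point I anticipate is the Chernoff step: one must check that $\delta$ dominates $p$ and that the gain $\log(\delta/p)$ is proportional to $\log(1/p)\asymp\tau^2(n-u+k)$. Both properties follow from $\delta\geq 1/k$ together with the freedom to choose $C_{\text{\tiny\ref{l: dist to spans I}}}$ large enough to make $p$ polynomially small in $n$; no delicate analysis is needed beyond this.
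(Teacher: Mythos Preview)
Your proof is correct and follows essentially the same route as the paper: the same Gram--Schmidt decomposition of the columns of $B$ (the paper phrases it via rotational invariance as $H_{\sigma(i)}=\widetilde H_{\sigma(i)}+\hat H_{\sigma(i)}$), the same Gaussian concentration for the individual distances, and the same reduction of $\bigcap_\sigma\Event_\sigma$ to a union over the $\leq u^k$ ordered $k$-tuples $(\sigma(u-k+1),\dots,\sigma(u))$. The only cosmetic difference is that where you run a martingale Chernoff bound for $\Prob(S>\delta k)$, the paper instead takes a union bound over all $\binom{k}{\lceil\delta k\rceil}\leq\delta^{-\delta k}$ candidate ``bad'' index sets $I$ and multiplies out the conditional tail probabilities; both routes give the same exponent after absorbing $\delta^{-\delta k}$ (respectively $\log(1/\delta)$) into the constant.

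One small correction to your closing remark: you do \emph{not} need, and in general do not have, $p$ polynomially small in $n$ (when $u=n$ and $k$ is bounded, $p=e^{-c_0\tau^2 k}$ is merely a constant). What you actually need for $\log(\delta/p)\geq c\,\tau^2(n-u+k)$ is $\log(1/\delta)\lesssim\tau^2(n-u+k)$, and this follows because $\delta\geq 1/k$ gives $\log(1/\delta)\leq\log k$, while $n-u+k\geq k$ (since $u\leq n$) gives $\tau^2(n-u+k)\geq C_{\text{\tiny\ref{l: dist to spans I}}}^2 k$, and $k\gtrsim\log k$ for all $k\geq 1$.
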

\begin{proof}
Since the linear span of $\{H_{\sigma(j)},\;j\leq u-k\}$ is completely determined by $\Gamma$
and the set $\{\sigma(j),\,u\geq j\geq u-k+1\}$,
it is enough to show the first assertion of the lemma: that for any fixed permutation $\sigma$ of $[u]$ the probability of the event
$$\big|\big\{i:\,u-k+1\leq i\leq u,\;\dist(H_{\sigma(i)},\spn\{H_{\sigma(j)},\;j\leq i-1\})\leq \tau\sqrt{n-u+k}\big\}\big|\geq (1-\delta)k$$
is bounded from below by $1-e^{-c\tau^2 \delta(n-u+k)k}$,
for an appropriate universal constant $c>0$.

Fix a permutation $\sigma$ and any subset $I\subset \{u-k+1,\dots,u\}$ of cardinality $\lceil \delta k\rceil$.
Basic properties of the Gaussian distribution (specifically, rotational invariance), together with the definition of $H_i$'s, immediately imply that
there exists a decomposition
$$H_{\sigma(i)}=\widetilde H_{\sigma(i)}+\hat H_{\sigma(i)},\quad\quad i\leq u,$$
such that for every $i>1$, $\widetilde H_{\sigma(i)}$ is a linear combination of $H_{\sigma(j)}$'s, $j<i$
(i.e.\ belongs to their linear span), while $\hat H_{\sigma(i)}$
is a non-zero multiple of the standard Gaussian vector in $\R^n$
independent from $\{H_{\sigma(j)},\,j<i\}$, with $\Exp\|\hat H_{\sigma(i)}\|_2^2\leq n$.
In particular, conditioned on any $(i-1)$--dimensional realization of $\spn\{H_{\sigma(j)},\,j<i\}$, the distance
$$\dist(\hat H_{\sigma(i)},\spn\{H_{\sigma(j)},\;j< i\})=\dist(H_{\sigma(i)},\spn\{H_{\sigma(j)},\;j< i\})$$
is equidistributed with the Euclidean norm
of a multiple of the standard Gaussian vector in $\R^{n-i+1}$ (with the multiplication coefficient not greater than one).
Thus, applying a standard concentration inequality for Lipschitz functions in the Gauss space (see, for example,
\cite[Theorem~II.6]{DS}), we get
$$\Prob\big\{
\dist(H_{\sigma(i)},\spn\{H_{\sigma(j)},\;j< i\})> \tau\sqrt{n-u+k}\mbox{ for all }i\in I\big\}
\leq e^{-c'\tau^2(n-u+k)|I|}
$$
for a suitable universal constant $c'>0$.
Taking the union bound over all possible choices of $I$, we get that the event
$$\big|\big\{i:\;u-k+1\leq i\leq u,\;\dist(H_{\sigma(i)},\spn\{H_{\sigma(j)},\;j<i\})\leq \tau\sqrt{n-u+k}\big\}\big|\geq (1-\delta)k$$
has probability at least $1-\delta^{-\delta k} e^{-c'\tau^2\delta(n-u+k)k}$.
The result follows.
\end{proof}

\section{Decomposition and structuring}\label{s: dd}

In this section we develop a way to estimate from above probability of the event
$\Event_{\text{\tiny\ref{l: basic discretization}}}$
from the previous section. The approach is significantly different from the one in paper \cite{Szarek}.
One of main challenges of the net-based approach is to control probabilities in a way that admits some
sort of a union bound. 
If the cardinality of the net $\Net$ introduced in the previous section were very small, we would be able
to bound the probability of $\Event_{\text{\tiny\ref{l: basic discretization}}}$ simply by summing up
probability estimates of 
inclusion $2\rho\, \Gamma A(B_1^n)\supset \Gluskin_m$ for each $A\in\Net$.
However, the size of $\Net$ is greater than $2^{n^2}$ and this approach would require to take
rather small value for $\rho$ to make sure the summation produces a number less than one.

To deal with this issue, we will partition every matrix $A$ from $\Net$ into
a matrix with ``very sparse'' columns and a matrix with columns of small Euclidean norms.
We introduce another global parameter $\alpha\in(0,1/2]$ whose value will be determined at the optimization stage.
Let $x$ be a vector in $\R^m$.
We say that $x$ is of {\it type $(\alpha+)$} if $\|x\|_1\leq 1$, and
$|\supp x|\leq 1/\alpha$.
Further, $x$ is of {\it type $(\alpha-)$} if $|\supp x|\leq n$, $\|x\|_1\leq 1$ and $\|x\|_2< \sqrt{\alpha}$.
Roughly speaking, type $(\alpha+)$ corresponds to very sparse vectors while $(\alpha-)$
consists of moderately sparse vectors of small Euclidean norm. It is easy to see that any vector $y\in\R^m$
with $\|y\|_1\leq 1$ and with cardinality of support at most $n$
can be decomposed into the sum $y_1+y_2$, where $y_1$ is of type $(\alpha+)$, $y_2$ is of type $(\alpha-)$,
and $y_1,y_2$ have disjoint supports.

Define two mappings $\Fone,\Ftwo:\Net\to\Net$ as follows:
given $A=(a_{ij})\in\Net$, let $\Fone(A)$ be the $m\times n$ matrix with entries $a_{ij}{\bf 1}_{|a_{ij}|\geq\alpha}$,
where ${\bf 1}_{|a_{ij}|\geq\alpha}$ is the indicator of the boolean expression ``$|a_{ij}|\geq\alpha$''.
Further, we set $\Ftwo(A)$ to be the $m\times n$ matrix with entries $a_{ij}{\bf 1}_{|a_{ij}|<\alpha}$.
Finally, set $\Fcomp(A)$ to be the $m\times 2n$ matrix obtained by concatenating $\Fone(A)$ and $\Ftwo(A)$.
Obviously, $A(B_1^n)\subset \Fcomp(A)(2B_1^{2n})$ for any $A\in\Net$.
This elementary relation turns out extremely useful in our context.
It shows that every point of the random cross-polytope $\Gamma A(B_1^n)$
is a convex combination of random vectors of two types: vectors which are $\alpha^{-1}$--sparse linear combinations
of $G_i$'s and vectors which have relatively small expected Euclidean norms. The number
of vectors of the first type is relatively small (because of the $\alpha^{-1}$--sparsity of corresponding linear combinations)
allowing an efficient net-argument. At the same time, vectors of the second type are ``short'',
which enables us to control their influence even though the number of corresponding linear combinations
is relatiely large.

In the following lemma we formulate sufficient conditions which allow us to bound the Banach--Mazur distance
between the Gluskin polytope $\Gluskin_m$ and an $n$-dimensional cross-polytope.
\begin{lemma}[Decomposition]\label{l: decomposition}
Let $\Event$ be an event of non-zero probability having the following structure:
$\Event=\bigcap\limits_{A\in\Net}\Event_A$, where for every $A\in\Net$ the event $\Event_A$
is measurable with respect to $\sigma$-algebra generated by vectors $G_j$, with $j\in\bigcup\limits_{i\leq n}\supp\,\col_i(A)$.
Let $\widetilde G$ be the standard Gaussian vector in $\R^n$ independent
from $\Gamma $.
Then
\begin{align*}
\Prob(\Event_{\text{\tiny\ref{l: basic discretization}}}\cap\Event)\leq
|\Net|\max\limits_{A\in\Net}\sup\limits_{\omega\in\Event_A}
\Prob\big(\big\{&\omega'\in\Omega:\;\mbox{For some $I\subset[2n]$ with $|I|=n$ we have}\\
&\mbox{$\widetilde G (\omega')\in 4\rho\,\big(\conv\big\{\pm\col_i(\Gamma (\omega)\Fcomp(A)),\;\;i\in I\big\}\big)$}\;\big\}\big)^{m-n^2}.
\end{align*}
\end{lemma}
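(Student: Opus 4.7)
\medskip
\noindent\textbf{Proof proposal.}
The plan is to combine a union bound over $\Net$ with an independence argument that separates the columns of $\Gamma$ that determine $\Fcomp(A)$ from the rest. First, I would write
$$\Prob(\Event_{\text{\tiny\ref{l: basic discretization}}}\cap \Event)\leq \sum_{A\in\Net}\Prob\big(\{2\rho\,\Gamma A(B_1^n)\supset \Gluskin_m\}\cap \Event_A\big),$$
so that it suffices to estimate, for each fixed $A\in\Net$, the probability of the intersection on the right.

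Next I would fix $A\in\Net$ and rewrite the geometric inclusion in terms of $\Fcomp(A)$. Because of the elementary relation $A(B_1^n)\subset \Fcomp(A)(2B_1^{2n})$ noted in the text, the inclusion $2\rho\,\Gamma A(B_1^n)\supset \Gluskin_m$ forces every generator $\pm G_j$ to lie in $4\rho\,\Gamma\Fcomp(A)(B_1^{2n})=4\rho\,\conv\{\pm\col_i(\Gamma\Fcomp(A)),\,i\leq 2n\}$. Since this body is the absolutely convex hull of $2n$ vectors in $\R^n$, the symmetric version of Caratheodory's theorem lets us pick, for each $j$, a subset $I_j\subset[2n]$ of size $n$ with $G_j\in 4\rho\,\conv\{\pm\col_i(\Gamma\Fcomp(A)),\,i\in I_j\}$.

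The crucial observation is a measurability/independence one. Set $J_A:=\bigcup_{i\leq n}\supp\,\col_i(A)$; by the definition of $\mtrx_{m,n}$ we have $|J_A|\leq n^2$. Both the random matrix $\Gamma\Fcomp(A)$ and the event $\Event_A$ are measurable with respect to the $\sigma$-algebra $\mathcal F_A$ generated by $\{G_j:j\in J_A\}$, whereas the columns $\{G_j:j\notin J_A\}$ are i.i.d.\ standard Gaussians independent of $\mathcal F_A$. Conditioning on $\mathcal F_A$, on the event $\Event_A$ the columns $\col_i(\Gamma(\omega)\Fcomp(A))$ become deterministic, and the remaining at least $m-n^2$ vectors $G_j$ must each land in the set
$$\bigcup_{\substack{I\subset[2n]\\|I|=n}} 4\rho\,\conv\{\pm\col_i(\Gamma(\omega)\Fcomp(A)),\,i\in I\}$$
independently. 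Denoting by $p_A(\omega)$ the probability that an independent copy $\widetilde G$ of the standard Gaussian lies in this union, the conditional probability is at most $p_A(\omega)^{m-n^2}$ (using $p_A(\omega)\leq 1$).

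Putting the three steps together, integrating out the conditioning gives
$$\Prob\big(\{2\rho\,\Gamma A(B_1^n)\supset \Gluskin_m\}\cap \Event_A\big)\leq \sup_{\omega\in\Event_A}p_A(\omega)^{m-n^2},$$
and summing over $A\in\Net$ and bounding by the maximum yields the claimed inequality. The only step that requires any care, and the one I would write out most carefully, is the passage from the deterministic geometric inclusion to the Caratheodory selection of an $n$-element index set $I_j$ for each $G_j$; everything else is bookkeeping around the decomposition $[m]=J_A\sqcup([m]\setminus J_A)$ and the independence of the two groups of Gaussian columns.
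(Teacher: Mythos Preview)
Your proposal is correct and follows essentially the same route as the paper's proof: union bound over $A\in\Net$, the inclusion $A(B_1^n)\subset \Fcomp(A)(2B_1^{2n})$, Caratheodory to pass to $n$-element index sets $I\subset[2n]$, and then the independence of $\{G_j:j\notin J_A\}$ from the $\sigma$-algebra generated by $\{G_j:j\in J_A\}$ (together with $|J_A|\leq n^2$) to obtain the exponent $m-n^2$. The only cosmetic difference is that the paper first restricts to indices $j\notin J_A$ before taking the union bound, whereas you take the union bound first and then condition; the logic is identical.
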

\begin{proof}
Pick any point $\omega\in \Event_{\text{\tiny\ref{l: basic discretization}}}\cap\Event$
(we assume that the intersection is non-empty as otherwise there is nothing to prove).
In view of the definition of $\Event_{\text{\tiny\ref{l: basic discretization}}}$, there is $A=A(\omega)\in\Net$
such that for any $j\leq m$ we have $G_j(\omega)\in 2\rho\,\Gamma (\omega)A(B_1^n)
\subset 4\rho\,\Gamma (\omega)\Fcomp(A)(B_1^{2n})$.
By Caratheodory's theorem, this implies that there is a subset $I=I(\omega,j)\subset[2n]$ of cardinality $n$ such that
$$G_j(\omega)\in 4\rho\,\big(\conv\big\{\pm\col_i(\Gamma (\omega)\Fcomp(A)),\;\;i\in I\big\}\big).$$
In what follows, we are only interested in indices $j\in [m]\setminus \bigcup\limits_{i\leq n}\supp\,\col_i(A)$,
which will enable us to use independence. In view of the above, we get
\begin{align*}
\Prob(\Event_{\text{\tiny\ref{l: basic discretization}}}\cap\Event)
\leq \Prob\big(\big\{\exists A\in\Net \;\;\;&\mbox{such that for any $j\notin\bigcup\limits_{i\leq n}\supp\,\col_i(A)$}\\
&\mbox{and some
$I=I(j)\subset[2n]$ with $|I|=n$ we have}\\
&\mbox{$G_j\in 4\rho\,\big(\conv\big\{\pm\col_i(\Gamma \Fcomp(A)),\;\;i\in I\big\}\big)$}\;\big\}\cap \Event\big)\\
&\hspace{-3.1cm}\leq |\Net|\,
\max\limits_{A\in\Net}
\Prob\big(\big\{\mbox{For any $j\notin\bigcup\limits_{i\leq n}\supp\,\col_i(A)$ and some}\\
&\mbox{$I=I(j)\subset[2n]$ with $|I|=n$ we have}\\
&\mbox{$G_j\in 4\rho\,\big(\conv\big\{\pm\col_i(\Gamma \Fcomp(A)),\;\;i\in I\big\}\big)$}\;\big\}\cap \Event_A\big).
\end{align*}
Observe that, by the conditions on $\Event_A$, vectors $G_j$, $j\notin\bigcup\limits_{i\leq n}\supp\,\col_i(A)$,
are independent from $\Event_A$ and $\Gamma \Fcomp(A)$.
Thus, the last expression can be estimated from above by
\begin{align*}
|\Net|\max\limits_{A\in\Net}\sup\limits_{\omega\in\Event_A}
\Prob\big(\big\{\omega'\in\Omega:\;&\mbox{For some $I\subset[2n]$ with $|I|=n$ we have}\\
&\mbox{$\widetilde G(\omega')\in 4\rho\,\big(\conv\big\{\pm\col_i(\Gamma (\omega)\Fcomp(A)),\;\;i\in I\big\}\big)$}\;\big\}\big)^{m-n^2}.
\end{align*}

\end{proof}

\bigskip

The last lemma provides a very useful mechanism of bounding probability $\Prob(\Event_{\text{\tiny\ref{l: basic discretization}}})$
from above. Indeed, if the events $\Event_A$ are such that for any $\omega\in\Event_A$ we have
\begin{align}
\Prob\big(\big\{\omega'\in\Omega:\;&\mbox{For some $I\subset[2n]$ with $|I|=n$ we have}\nonumber\\
&\mbox{$\widetilde G(\omega')\in 4\rho\,\big(\conv\big\{\pm\col_i(\Gamma (\omega)\Fcomp(A)),\;\;i\in I\big\}\big)$}\;\big\}\big)\leq 1/2,
\label{eq: eventa cond}
\end{align}
then, by choosing $m$ sufficiently large (so that the power $m-n^2$ is big enough)
we will beat the cardinality of $\Net$ and obtain an upper estimate for $\Prob(\Event_{\text{\tiny\ref{l: basic discretization}}}\cap \Event)$.
If, at the same time, the probability of $\Event$ is close to one, this will imply upper bounds for $\Prob(\Event_{\text{\tiny\ref{l: basic discretization}}})$.
Thus, the rest of the argument consists of two major steps:
\begin{itemize}

\item Find an appropriate event $\Event_\cap=\bigcap\limits_{A\in\Net}\Event_A$ satisfying
conditions of Lemma~\ref{l: decomposition};

\item Show that for any $A\in\Net$ and $\omega\in\Event_A$ we have \eqref{eq: eventa cond}.

\end{itemize}

\section{Constructing event $\Event_\cap$}\label{s: event}

We define the following parametric family of cross-polytopes in $\R^n$.
Given $1\leq k\leq n$ and $h>0$, define
\begin{align*}
&\mbox{$\crosspol(k,h)$ --- collection of all origin-symmetric cross-polytopes $T\subset\R^n$}\\
&\mbox{having the following structure:
$T=\conv\{\pm x_1,\pm x_2,\dots,\pm x_n\}$, where}\\
&\mbox{for each permutation $\sigma$ of $[n]$ we have:}\\
&\big|\big\{i:\;n-k+1\leq i\leq n,\;\dist(x_{\sigma(i)},\spn\{x_{\sigma(j)},\;j<i\})\leq h\big\}\big|\geq k/4.
\end{align*}
%Further, given $1\leq k\leq n$ and $h>0$, $\delta\in(0,1]$, define
%\begin{align*}
%&\mbox{$\crosspol_2(k,h,\delta)$ --- family of all origin-symmetric cross-polytopes}\\
%&\mbox{$T=\conv\{\pm x_1,\pm x_2,\dots,\pm x_n\}$ in $\R^n$
%such that}\\
%&\big|\big\{i:\;n-k+1\leq i\leq n,\;\dist(x_{i},\spn\{x_{j},\;j<i\})
%\leq h\big\}\big|\geq (1-\delta)k.
%\end{align*}

%The first parametric family (for appropriate choice of parameters)
%contains {\it tilted} cross-polytopes, which have relatively small Gaussian measure
%because of essentially non-orthogonal principal semi-axes.
%The collections are used to define the events $\Event_A$ ($A\in\Net$) and $\Event_\cap$ as follows.

Further, let $s,\widetilde s,\tau,\delta$ be parameters satisfying $n\geq s\geq \widetilde s\geq 1$ and $\tau\geq C_{\text{\tiny\ref{l: dist to spans I}}}$.
For every matrix $A\in\Net$ define two events $\Event_A^1(s,\widetilde s)$ and
$\Event_A^2(\widetilde s,\delta,\tau)$ as follows:
\begin{align*}
\Event_A^1(s,\widetilde s):=\big\{&\mbox{For every $I_1\subset[n]$, $I_2\subset\{n+1,\dots,2n\}$
with $|I_1\cup I_2|=n$, $|I_1|\geq n-\widetilde s$,}\\
&\mbox{such that vectors $\col_i(\Gamma \Fcomp(A))$, $i\in I_1\cup I_2$, are linearly independent, we}\\
&\mbox{have $\conv\{\pm \col_i(\Gamma \Fcomp(A)),\;i\in I_1\cup I_2\}\in \crosspol(s,C_{\text{\tiny\ref{l: dist to spans I}}}\sqrt{2s})$}\big\}
\end{align*}
and
\begin{align*}
\Event_A^2(\widetilde s,\delta,\tau):=\big\{&\mbox{For every $I_1\subset[n]$, $I_2\subset\{n+1,\dots,2n\}$
with $|I_1\cup I_2|=n$, $|I_2|> \widetilde s$, such}\\
&\mbox{that vectors $\col_i(\Gamma \Fcomp(A))$, $i\in I_1\cup I_2$, are linearly independent, we have}\\
&\mbox{$\big|\big\{i\in I_2:\;\dist(\col_i(\Gamma \Fcomp(A)),\spn\{\col_j(\Gamma \Fcomp(A)),\;j\in (I_1\cup I_2)\cap[i-1]\})$}\\
&\leq \tau\sqrt{\alpha|I_2|}\big\}\big|\geq (1-\delta)|I_2|\big\}.
\end{align*}
Set $\Event_A:=\Event_A^1\cap\Event_A^2$ and
$\Event_\cap:=\bigcap\limits_{A\in\Net}\Event_A$ (for brevity, we sometimes suppress the list of parameters).
It is not difficult to see that by the definition of $\Event_A$ (regardless of the values of the parameters),
the event is measurable with respect to the $\sigma$-algebra generated by
$G_j$, with $j\in\bigcup\limits_{i\leq n}\supp\,\col_i(A)$. Thus, the intersection $\Event_\cap$
satisfies conditions of Lemma~\ref{l: decomposition}.
In the next two lemmas, which conclude this section, we will show that, with an appropriate choice of parameters,
the event $\Event_\cap$ has probability close to one.

\begin{lemma}\label{l: parameters I}
There is a universal constant $C_{\text{\tiny\ref{l: parameters I}}}>0$ with the following property.
Assume that
$m/\varepsilon\leq n^{10}$,
$1>\delta\geq (\log n)^{-1}$, $n\geq \widetilde s\geq \log^2 n$ and, additionally, assume that
$\tau\geq C_{\text{\tiny\ref{l: dist to spans I}}}$ is such that
$$\min\bigg(\frac{\tau^2\delta \widetilde s^2\alpha}{n},\frac{\tau^2\delta \widetilde s}{n}\bigg)\geq C_{\text{\tiny\ref{l: parameters I}}}\log n.$$
Then
$$\Prob\big(\bigcap\limits_{A\in \Net}\Event_A^2(\widetilde s,\delta,\tau)\big)\geq 1-\frac{1}{n}.$$
\end{lemma}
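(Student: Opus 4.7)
My plan is to apply a union bound over $A \in \Net$, over pairs $(I_1, I_2)$, and over the failure of the distance condition, combined with a two-part count of the effective $A$-data exploiting the decomposition $\Fcomp(A) = (\Fone(A), \Ftwo(A))$. The key observation is that once $(I_1, I_2)$ is fixed, $\Event_A^2$ depends on $A$ only through the $|I_1|$ columns of $\Fone(A)$ at indices $I_1$ and the $u' := |I_2|$ columns of $\Ftwo(A)$ at indices $I_2 - n$. These two counts are of very different sizes: the $\Fone$-columns have support at most $1/\alpha$ (since entries are at least $\alpha$ in absolute value and $\ell_1$-norm at most $1$), contributing at most $\exp(Cn\log n/\alpha)$ possibilities, while the $\Ftwo$-columns have entries in $\varepsilon\Z \cap (-\alpha, \alpha)$ and support at most $n$, contributing $\exp(Cnu'\log n)$ possibilities (using $m/\varepsilon \leq n^{10}$). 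Matching these two counts against the concentration exponent will reproduce exactly the two inequalities in the hypothesis.

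For fixed $A$, $I_1$, $I_2$ with $u' > \widetilde s$, I would apply Lemma~\ref{l: dist to spans I} to the rescaled matrix $B_{A, I_1, I_2}$ obtained by taking the $I_1 \cup I_2$ columns of $\Fcomp(A)$ and dividing each $I_2$-column by $\sqrt{\alpha}$. Every column then has $\ell_2$-norm at most $1$: the $I_1$-columns by $\|\cdot\|_2 \leq \|\cdot\|_1 \leq 1$, and the rescaled $I_2$-columns by $\sqrt{\|c\|_\infty \|c\|_1 / \alpha} \leq 1$. With the permutation placing $I_1$-indices first and $I_2$-indices last in natural order, and taking $u = n$, $k = u'$, the lemma yields (for $u' \leq n/2$) that with probability at least $1 - \exp(-c\tau^2 \delta (u')^2)$, at least $(1-\delta)u'$ of the $I_2$-columns of $\Gamma B_{A, I_1, I_2}$ have distance at most $\tau\sqrt{u'}$ from the span of earlier columns; rescaling by $\sqrt{\alpha}$ converts this to the required bound $\tau\sqrt{\alpha|I_2|}$ in $\Event_A^2$. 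For $u' > n/2$, I would split $I_2$ into a few pieces of size at most $n/2$ and apply Lemma~\ref{l: dist to spans I} separately to each piece with suitably smaller $u$; one checks that the distance bound remains $\tau\sqrt{\alpha|I_2|}$ across all pieces and that the per-$(A, I_1, I_2)$ failure probability remains of the form $\exp(-c'\tau^2 \delta (u')^2)$.

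Combining via the triple union bound (and using that distinct $A \in \Net$ sharing the same $(I_1, I_2)$-data yield the same event for fixed $(I_1, I_2)$), the total failure probability is bounded by
$$n \cdot \binom{2n}{n} \cdot \exp\!\bigl(Cn\log n/\alpha + Cnu'\log n - c'\tau^2 \delta (u')^2\bigr)$$
for each $u' \geq \widetilde s + 1$. Under the hypothesis $\tau^2 \delta \widetilde s \geq C n\log n$, the exponent is decreasing in $u' \geq \widetilde s$, so the worst case is at $u' = \widetilde s$. Demanding the total be at most $1/n$ then reduces to the two inequalities $\tau^2 \delta \widetilde s^2 \alpha \geq C''n\log n$ (dominating the $\Fone$-counting term) and $\tau^2 \delta \widetilde s \geq C''n\log n$ (dominating the $\Ftwo$-counting term), which is exactly the hypothesis of the lemma.

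The hard part will be the two-part counting itself. A naive bound $|\Net| \leq \exp(Cn^2\log n)$ would force the constraint $\tau^2 \delta \widetilde s^2 \geq Cn^2\log n$, which is too strong and would produce a much weaker lemma; the improvement to $\tau^2 \delta \widetilde s^2 \alpha \geq Cn\log n$ hinges on observing that, for fixed $(I_1, I_2)$, only the $n$ selected columns of $\Fcomp(A)$ matter, and these split naturally into a highly sparse $\Fone$-part and a bounded-$\ell_\infty$ $\Ftwo$-part. A secondary technical issue is the case analysis $u' > n/2$ in the application of Lemma~\ref{l: dist to spans I}, where one must verify that splitting $I_2$ preserves the distance bound $\tau\sqrt{\alpha|I_2|}$ and absorbs the minor entropy contributions (the $\delta^{-\delta k}$ factor internal to Lemma~\ref{l: dist to spans I}, and the $\binom{2n}{n}$ factor from $(I_1, I_2)$ choices) under the additional assumptions $\widetilde s \geq \log^2 n$, $\delta \geq 1/\log n$, and $\tau \geq C_{\text{\tiny\ref{l: dist to spans I}}}$.
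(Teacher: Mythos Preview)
Your approach is essentially the same as the paper's: the paper parametrizes directly by the set $T_p$ of $m\times n$ matrices whose first $p$ columns are of type $(\alpha+)$ and last $n-p$ columns of type $(\alpha-)$ with entries in $\varepsilon\Z$ (this absorbs your $(I_1,I_2)$ choice and the column data into a single count, so no separate $\binom{2n}{n}$ factor appears), then applies Lemma~\ref{l: dist to spans I} with $u=n$, $k=n-p$ and the $(\alpha-)$ columns implicitly rescaled by $\sqrt{\alpha}$, obtaining the failure bound $e^{-c\tau^2\delta(n-p)^2}$; the two-term matching of this exponent against the $\Fone$- and $\Ftwo$-counts is exactly your computation and reproduces the two hypotheses on $\tau$.

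One remark on your handling of $u'>n/2$: the paper simply applies Lemma~\ref{l: dist to spans I} with $k=n-p$ for all $p\leq n-\widetilde s-1$, silently ignoring the stated constraint $k\leq u/2$. This is harmless because that constraint is not actually used anywhere in the proof of Lemma~\ref{l: dist to spans I} (the concentration step only needs $\tau\geq C$ and the dimension bound $n-i+1\leq n-u+k$). Your proposed splitting of $I_2$ is therefore unnecessary---and as written it is also problematic, since when $|I_1|$ is very small the first piece cannot satisfy $k\leq u/2$ either. The clean route is to drop the splitting and invoke Lemma~\ref{l: dist to spans I} directly with $k=u'$, as the paper does.
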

\begin{proof}
Fix any $p\in\{0,1,\dots,n\}$ and consider the collection $T_p$ of all $m\times n$ matrices $B$
satisfying the following condition: there is $A\in\Net$ such that $B$ is a submatrix of $\Fcomp(A)$,
where the first $p$ columns of $B$ are from $\Fone(A)$ and the last $n-p$ columns are from $\Ftwo(A)$.
Obviously, the set $T_p$ is contained within the collection $T_p'$ of all $m\times n$ matrices where the first
$p$ columns are of type $(\alpha+)$, the last $n-p$ columns are of type $(\alpha-)$, and, additionally, all
entries of the matrix are from $\varepsilon\Z$.
It is not difficult to see that the cardinality of $T_p'$ (hence, $T_p$) can be bounded from above as
$$|T_p'|\leq\bigg({m\choose {\lfloor\alpha^{-1}\rfloor}}(\varepsilon/3)^{-1/\alpha}\bigg)^p
\bigg({m\choose n}(\varepsilon/3)^{-n}\bigg)^{n-p}.
$$
We need to show that probability of the event $\bigcap\limits_{A\in \Net}\Event_A^2$,
that is, the event
\begin{align*}
\big\{&\mbox{For any $p\in\{0,1,\dots,n-\widetilde s-1\}$ and any $B\in T_{p}$ of full rank, we have}\\
&\mbox{$\big|\big\{p+1\leq i\leq n:\;\dist(\col_i(\Gamma B),\spn\{\col_j(\Gamma B),\;
j<i\})$}\leq \tau\sqrt{\alpha (n-p)}\big\}\big|\\
&\geq (1-\delta)(n-p)\big\},
\end{align*}
is close to one.
Take any $p\in\{0,1,\dots,n-\widetilde s-1\}$.
By the definition of $T_p$, we have $\|\col_i(B)\|_2\leq\sqrt{\alpha}$ for all $B\in T_p$ and $i>p$.
Hence, in view of Lemma~\ref{l: dist to spans I}, for any $B\in T_p$ of full rank we have
\begin{align*}
\Prob\big\{
&\big|\big\{p+1\leq i\leq n:\;\dist(\col_i(\Gamma B),\spn\{\col_j(\Gamma B),\;
j<i\})\\
&\leq \tau\sqrt{\alpha(n-p)}\big\}\big|\geq (1-\delta)(n-p)\big\}
\geq 1-e^{-c_{\text{\tiny\ref{l: dist to spans I}}}\tau^2 \delta (n-p)^2}.
\end{align*}
This immediately implies
\begin{align*}
\Prob\big(\bigcap\limits_{A\in \Net}\Event_A^2\big)&\geq 1-\sum\limits_{p=0}^{n-\widetilde s-1}
\bigg({m\choose {\lfloor\alpha^{-1}\rfloor}}(\varepsilon/3)^{-1/\alpha}\bigg)^p
\bigg({m\choose n}(\varepsilon/3)^{-n}\bigg)^{n-p}
\,e^{-c_{\text{\tiny\ref{l: dist to spans I}}}\tau^2 \delta (n-p)^2}\\
&=:1-\sum\limits_{p=0}^{n-\widetilde s-1}J_p.
\end{align*}
A direct computation shows that for every $p\in\{0,1,\dots,n-\widetilde s-1\}$, we have
\begin{align*}
J_p\leq \bigg(\frac{3e\alpha m}{\varepsilon}\bigg)^{\alpha^{-1}p}
\bigg(\frac{3em}{\varepsilon n}\bigg)^{n(n-p)}
\,e^{-c_{\text{\tiny\ref{l: dist to spans I}}}\tau^2 \delta (n-p)^2}.
\end{align*}
Note that, by the assumptions on the parameters assuming the constant $C_{\text{\tiny\ref{l: parameters I}}}$ is sufficiently
large), we have
$$c_{\text{\tiny\ref{l: dist to spans I}}}\tau^2 \delta (n-p)^2\geq
2\Big(\alpha^{-1}p\log\frac{3e\alpha m}{\varepsilon}+n(n-p)\log\frac{3em}{\varepsilon n}\Big),$$
whence
$J_p\leq e^{-c_{\text{\tiny\ref{l: dist to spans I}}}\tau^2 \delta (n-p)^2/2}$.
Summing over all admissible $p$, we get the result.
\end{proof}

\begin{lemma}\label{l: parameters II}
There is a universal constant $C_{\text{\tiny\ref{l: parameters II}}}$ with the following property. Assume that
$m/\varepsilon\leq n^{10}$, $n\geq s\geq 4\widetilde s\geq 4\log^2 n$, and that
$\frac{\widetilde s^2\alpha}{n}\geq C_{\text{\tiny\ref{l: parameters II}}}\log n$.
Then
$$\Prob\big(\bigcap\limits_{A\in \Net}\Event_A^1(s,\widetilde s)\big)\geq 1-\frac{1}{n}.$$
\end{lemma}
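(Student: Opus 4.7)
The plan is to mirror the structure of the proof of Lemma~\ref{l: parameters I}: discretize the space of possible ``selected-column'' matrices, apply Lemma~\ref{l: dist to spans I} uniformly, and close the argument with a union bound.

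I would first fix $j:=|I_2|\in\{0,1,\dots,\widetilde s\}$ and introduce the set $\mathcal{B}_j$ of all $m\times n$ matrices whose first $n-j$ columns are of type $(\alpha+)$, whose last $j$ columns are of type $(\alpha-)$, and whose entries lie in $\varepsilon\mathbb{Z}$. After a harmless reordering of the columns, every submatrix of $\Fcomp(A)$ indexed by a valid pair $(I_1,I_2)$ with $|I_2|=j$ belongs to $\mathcal{B}_j$, and the volumetric count used in Lemma~\ref{l: parameters I} yields
$$|\mathcal{B}_j|\le\binom{n}{j}^2\Big(\binom{m}{\lfloor\alpha^{-1}\rfloor}(\varepsilon/3)^{-1/\alpha}\Big)^{n-j}\Big(\binom{m}{n}(\varepsilon/3)^{-n}\Big)^{j}.$$

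For every $B\in\mathcal{B}_j$ I would then invoke Lemma~\ref{l: dist to spans I} with $u=n$, $k=s$, $\tau:=C_{\text{\tiny\ref{l: dist to spans I}}}\sqrt{2}$, and $\delta:=3/4$: every column of $B$ has Euclidean norm at most one, the threshold $\tau\sqrt{n-u+k}$ equals $C_{\text{\tiny\ref{l: dist to spans I}}}\sqrt{2s}$, and the guarantee of $(1-\delta)s=s/4$ ``good'' indices among the last $s$ positions matches the definition of $\crosspol(s,C_{\text{\tiny\ref{l: dist to spans I}}}\sqrt{2s})$. The intersection of the events $\Event_\sigma$ of Lemma~\ref{l: dist to spans I} over all permutations $\sigma\in\Pi_n$ is therefore precisely the statement $\conv\{\pm\col_i(\Gamma B),\,i\le n\}\in\crosspol(s,C_{\text{\tiny\ref{l: dist to spans I}}}\sqrt{2s})$, and it has probability at least $1-n^{s}\exp(-c_{\text{\tiny\ref{l: dist to spans I}}}\tau^2\delta\,s^2)$. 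A union bound over $B\in\mathcal{B}_j$ and over $j\in\{0,\dots,\widetilde s\}$ then reduces the lemma to a single arithmetic inequality.

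The hard part is this final numerical check: one must verify that $c_{\text{\tiny\ref{l: dist to spans I}}}\tau^2\delta\,s^2$ dominates $\log|\mathcal{B}_j|+s\log n$ uniformly in $j\le\widetilde s$, with enough slack for the extra factor of $\widetilde s+1$ coming from the summation over $j$. Using $m/\varepsilon\le n^{10}$, $\log|\mathcal{B}_j|$ splits into an $(\alpha+)$--contribution of order $(n-j)\alpha^{-1}\log n$ and an $(\alpha-)$--contribution of order $nj\log n$. Combining $s\ge 4\widetilde s$ with the hypothesis $\widetilde s^2\alpha/n\ge C_{\text{\tiny\ref{l: parameters II}}}\log n$ yields $s^2\ge 16\,C_{\text{\tiny\ref{l: parameters II}}}\,n\log n/\alpha$, which absorbs the $(\alpha+)$--contribution once $C_{\text{\tiny\ref{l: parameters II}}}$ is chosen large enough; using in addition $j\le\widetilde s$ together with $s^2\ge 16\widetilde s^2$, the same chain of inequalities absorbs the $(\alpha-)$--contribution. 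The resulting estimate gives $\Prob\big(\bigcap_{A\in\Net}\Event_A^1(s,\widetilde s)\big)\ge 1-1/n$ and completes the argument.
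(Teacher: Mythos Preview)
There is a genuine gap in your final numerical check: the union bound over the $(\alpha-)$ columns is too expensive, and the claimed absorption fails. The $(\alpha-)$ contribution to $\log|\mathcal{B}_j|$ is of order $nj\log n$, and with $j$ as large as $\widetilde s$ this is of order $n\widetilde s\log n$. You assert that ``$s^2\ge 16\widetilde s^2$ together with the same chain of inequalities'' absorbs it, but neither route works. From $s^2\ge 16\widetilde s^2$ one would need $\widetilde s^2\gtrsim n\widetilde s\log n$, i.e.\ $\widetilde s\gtrsim n\log n$, contradicting $\widetilde s\le s/4\le n/4$. From $s^2\ge 16C_{\text{\tiny\ref{l: parameters II}}}\,n\log n/\alpha$ one would need $1/\alpha\gtrsim\widetilde s$, i.e.\ $\alpha\widetilde s\lesssim 1$; but the hypothesis $\widetilde s^2\alpha\ge C_{\text{\tiny\ref{l: parameters II}}}\,n\log n$ combined with $\widetilde s\le n/4$ forces $\alpha\widetilde s\ge 4C_{\text{\tiny\ref{l: parameters II}}}\log n\to\infty$. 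Concretely, in the regime of Section~\ref{s: completion} ($\widetilde s$ of order $n^{8/9}$) one has $n\widetilde s\log n\approx n^{17/9}\log n$ while $s^2\approx n^{16/9}$, so your exponent has the wrong sign and the union bound blows up.

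The paper avoids this by \emph{not} taking a union bound over the $(\alpha-)$ columns at all. It introduces $Q_p=\{B\,\Proj_p:B\in T_p\}$, the set of matrices whose first $p\ge n-\widetilde s$ columns are of type $(\alpha+)$ and whose remaining columns are zero; then $\log|Q_p|$ is only of order $\alpha^{-1}n\log n$, which \emph{is} dominated by $s^2$ via the hypothesis $\widetilde s^2\alpha/n\ge C_{\text{\tiny\ref{l: parameters II}}}\log n$. The $(\alpha-)$ columns are handled deterministically by a combinatorial reduction: since $|I_2|\le\widetilde s\le s/4$, for any permutation $\sigma$ of $[n]$ at least $3s/4$ of the indices $\sigma(n-s+1),\dots,\sigma(n)$ land in the $(\alpha+)$ block $I_1$, and one transfers the distance estimate from a suitable induced permutation $\sigma'$ of $[p]$ (where Lemma~\ref{l: dist to spans I} with $\delta=1/2$ gives at least $s/2$ good indices) back to $\sigma$, losing at most $s/4$ indices in the process. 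This is exactly why the hypothesis reads $s\ge 4\widetilde s$ rather than merely $s\ge\widetilde s$, and this reduction is the idea your plan is missing.
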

\begin{proof}
Let $T_p$ be defined the same way as in the proof of Lemma~\ref{l: parameters I}.
For any $p=n-\widetilde s,n-\widetilde s+1,\dots,n$, denote by
$\Proj_p:\R^n\to\R^n$ the orthogonal projection onto first $p$ standard coordinate vectors
and observe that the set of matrices
$$Q_p:=\big\{B\Proj_p:\;B\in T_p\big\}$$
has cardinality
$$
|Q_p|\leq\bigg({m\choose {\lfloor\alpha^{-1}\rfloor}}(\varepsilon/3)^{-1/\alpha}\bigg)^p
\leq \bigg(\frac{3e\alpha m}{\varepsilon}\bigg)^{\alpha^{-1}p}.
$$
Further, observe that the intersection $\bigcap\limits_{A\in \Net}\Event_A^1(s,\widetilde s)$
coincides with the event $\bigcap\limits_{p=n-\widetilde s}^n\Event_p$, where
\begin{align*}
\Event_p:=\big\{&\mbox{For every $A\in\Net$, $I_1\subset[n]$, $I_2\subset\{n+1,\dots,2n\}$ with $|I_1\cup I_2|=n$, $|I_1|= p$,}\\
&\mbox{such that vectors $\col_i(\Gamma\Fcomp(A))$, $i\in I_1\cup I_2$, are linearly independent, we}\\
&\mbox{have $\conv\{\pm \col_i(\Gamma\Fcomp(A)),\;i\in I_1\cup I_2\}\in \crosspol(s,C_{\text{\tiny\ref{l: dist to spans I}}}\sqrt{2s})$}\big\}.
\end{align*}
In turn, probability of each event $\Event_p$ can be bounded from below by probability of the event
\begin{align*}
\Event_p':=\Big\{
&\mbox{For any $B\in Q_p$ of rank $p$ and any permutation $\sigma$ of $[p]$ we have}\\
&\big|\big\{i:\;p-s+1\leq i\leq p,\;\dist(\col_{\sigma(i)}(\Gamma B),\\
&\spn\{\col_{\sigma(j)}(\Gamma B),\;j\leq i-1\})\leq C_{\text{\tiny\ref{l: dist to spans I}}}\sqrt{2s}\big\}\big|\geq s/2\Big\}.
\end{align*}
Let us prove the last assertion.
Take any $\omega\in\Event_p'\setminus\Omega_0$, where $\Omega_0$ is the event (of probability zero)
that for some $A\in\Net$, $\Gamma\Fcomp(A)$ contains an $n\times n$ submatrix of deficient rank.
Further, let $\sigma$ be any permutation of $[n]$;
let $A\in\Net$ and let $I_1\subset[n]$, $I_2\subset\{n+1,\dots,2n\}$ with $|I_1\cup I_2|=n$, $|I_1|= p$.
Denote by $B$ the $m\times n$ matrix with first $p$ columns coincident with $\col_i(\Fcomp(A))$, $i\in I_1$
(with ordering of columns preserved),
and last $n-p$ zero columns. Clearly, $B\in Q_p$. Define permutation $\sigma'$ of $[p]$ is such a way that
$\sigma^{-1}(\sigma'(\ell))$ is increasing with $\ell$ on $[p]$.
By the definition of event $\Event_p'$, we have that there are at least $s/2$ indices
$i\in\{p-s+1,\dots,p\}$ such that
$$\dist(\col_{\sigma'(i)}(\Gamma(\omega) B),
\spn\{\col_{\sigma'(j)}(\Gamma(\omega) B),\;j\leq i-1\})\leq C_{\text{\tiny\ref{l: dist to spans I}}}\sqrt{2s}.$$
Let $R:I_1\cup I_2\to[n]$ be the order-preserving bijection and set
$x_i:=\col_{R^{-1}(i)}(\Gamma(\omega)\Fcomp(A))$, $i\leq n$.
Then the last condition can be rewritten as
\begin{equation}\label{eq: aux 1}
\dist(x_{\sigma'(i')},
\spn\{x_{\sigma'(j)},\;j\leq i'-1\})\leq C_{\text{\tiny\ref{l: dist to spans I}}}\sqrt{2s}\;\;\mbox{for at least $s/2$ ind.\ $i'\geq p-s+1$}.
\end{equation}
The conditions on $p$ and $\widetilde s$ imply that
the set $S:=\{\sigma(i):\;n-s+1\leq i\leq n\}\cap [p]$ has cardinality at least $s-\widetilde s\geq 3s/4$.
Further, for any $i$ with $\sigma(i)\in S$ we have $\sigma(i)=\sigma'(i')$, where,
by the definition of $\sigma'$, necessarily $i'\geq p-(n-i)\geq p-s+1$, and, moreover,
$\spn\{x_{\sigma'(j)},\;j\leq i'-1\}\subset \spn\{x_{\sigma(j)},\;j\leq i-1\}$.
In particular, we have
$$\dist(x_{\sigma(i)},
\spn\{x_{\sigma(j)},\;j\leq i-1\})\leq \dist(x_{\sigma'(i')},
\spn\{x_{\sigma'(j)},\;j\leq i'-1\}).$$
This, together with \eqref{eq: aux 1}, implies
that
$$\dist(x_{\sigma(i)},
\spn\{x_{\sigma(j)},\;j\leq i-1\})\leq C_{\text{\tiny\ref{l: dist to spans I}}}\sqrt{2s}\;\;\mbox{for at least $s/4$ indices $i\geq n-s+1$},$$
whence $\omega\in \Event_p$. Thus, indeed $\Prob(\Event_p)\geq\Prob(\Event_p')$.

\medskip

Applying Lemma~\ref{l: dist to spans I} (with $\delta:=1/2$, $u:=p$, $k:=s$, $\tau:=C_{\text{\tiny\ref{l: dist to spans I}}}$)
and taking the union over all $B\in Q_p$, we get
$$\Prob(\Event_p')\geq 1-
n^s\,e^{-c_{\text{\tiny\ref{l: dist to spans I}}}C_{\text{\tiny\ref{l: dist to spans I}}}^2 s^2/2}\bigg(\frac{3e\alpha m}{\varepsilon}\bigg)^{\alpha^{-1}n}
\geq 1-e^{-c_{\text{\tiny\ref{l: dist to spans I}}}C_{\text{\tiny\ref{l: dist to spans I}}}^2 s^2/4},
$$
where the last relation follows by the assumptions on parameters.
Taking the union bound over admissible $p$, we get the result.
\end{proof}

\section{Gaussian measure of tilted cross-polytopes}\label{s: crosspol}

Let us recall our proof strategy as outlined in Section~\ref{s: dd}.
We have constructed the event $\Event_\cap=\bigcap\limits_{A\in\Net} \Event_A$ and
essentially showed in Lemmas~\ref{l: parameters I}
and~\ref{l: parameters II} that under an appropriate choice of parameters $\Event_\cap$
has probability close to one.
As the second step of the proof, we will show that (again, with appropriately chosen parameters)
each $\omega\in\Event_A=\Event_A^1\cap\Event_A^2$ satisfies \eqref{eq: eventa cond}.
Clearly, \eqref{eq: eventa cond} can be interpreted as a statement about the Gaussian measure of a union of
cross-polytopes of the form $4\rho\,\conv\big\{\pm \col_i(\Gamma(\omega)\Fcomp(A)),\;i\in I\big\}$,
where $I$ is a subset of $[2n]$ of cardinality $n$.
We will estimate the measure of each such cross-polytope in one of the two ways depending on the cardinality of the set $I\cap[n]$.
When $I\cap[n]$ is ``large'' (that is, vast majority generating vectors of the cross-polytope are realized
as $\alpha^{-1}$--sparse combinations of columns of $\Gamma(\omega)$, assuming an appropriate
rescaling), we will use the condition that $\omega\in\Event_A^1$, whence the cross-polytope is from the class $\crosspol$.
The Gaussian measure of such polytopes is computed below in Lemma~\ref{l: crosspol1},
the central statement of this section.
In the other case, when $I\cap[n]$ is ``not very large'', we will use the assumption $\omega\in\Event_A^2$,
which allows a relatively simple upper bound for the measure (essentially owing to the fact that the
expected norm of the columns of $\Gamma\Fcomp_2(A)$ is rather small).
The above description does not include the process of optimizing all the involved
parameters, which we leave for the last section.

\medskip

The next lemma is elementary; its proof is given for Reader's convenience.

\begin{lemma}
Let $P=\conv\{\pm x_1,\pm x_2,\dots,\pm x_n\}$ be a symmetric non-degenerate cross-polytope
in $\R^n$, and let $d_i:=\dist(x_i,\spn\{x_j,j<i\})$, $i\leq n$. Further, let $1\leq r\leq n$,
and let $P'=\conv\{\pm y_1,\pm y_2,\dots,\pm y_n\}$ be a symmetric cross-polytope in $\R^n$
such that $y_i=x_i$ for $i\leq r$; $\|y_i\|_2=d_i$ for $i=r+1,\dots,n$, and vectors $y_i$, $i\geq r+1$,
are mutually orthogonal and orthogonal to $\spn\{x_j,j\leq r\}$.
Then $\gamma_n(P')\geq\gamma_n(P)$.
\end{lemma}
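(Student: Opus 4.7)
The strategy is to reduce to a one-step perpendicularization lemma and iterate it $n-r$ times. The one-step statement is: \emph{for a non-degenerate symmetric cross-polytope $Q=\conv\{\pm z_1,\dots,\pm z_n\}$, let $z_n^\perp$ be the component of $z_n$ orthogonal to $\spn\{z_1,\dots,z_{n-1}\}$, $d:=\|z_n^\perp\|_2$, and set $Q^*:=\conv\{\pm z_1,\dots,\pm z_{n-1},\pm z_n^\perp\}$; then $\gamma_n(Q)\leq \gamma_n(Q^*)$.} To prove this, apply an orthogonal change of coordinates so that $\spn\{z_1,\dots,z_{n-1}\}=\R^{n-1}\times\{0\}$ and $z_n=v+d\,e_n$ with $v\in\R^{n-1}\times\{0\}$. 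A direct parametrization of the convex hull shows that the horizontal slices $\{x_n=t\}\cap Q^*$ and $\{x_n=t\}\cap Q$ are, respectively, $(1-|t|/d)K$ and its translate $(t/d)v+(1-|t|/d)K$, where $K:=\conv\{\pm z_1,\dots,\pm z_{n-1}\}$. Fubini's theorem for the Gaussian measure combined with Anderson's inequality, $\gamma_{n-1}(L+w)\leq \gamma_{n-1}(L)$ for symmetric convex $L$, applied pointwise in $t$ with $L=(1-|t|/d)K$, yields $\gamma_n(Q)\leq\gamma_n(Q^*)$.

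\textbf{Iteration.} Set $P_n:=P$; for $k=n,n-1,\dots,r+1$, let $\tilde y_k$ be the orthogonal projection of $x_k$ onto $\spn\{x_1,\dots,x_{k-1},\tilde y_{k+1},\dots,\tilde y_n\}^\perp$, and put $P_{k-1}:=\conv\{\pm x_1,\dots,\pm x_{k-1},\pm \tilde y_k,\pm\tilde y_{k+1},\dots,\pm\tilde y_n\}$. Each step obeys the one-step lemma (with $x_k$ in the role of $z_n$), so $\gamma_n(P_k)\leq\gamma_n(P_{k-1})$. A descending induction on $k$ simultaneously verifies that every $\tilde y_j$ lies in $\spn\{x_1,\dots,x_{j-1}\}^\perp$; therefore the vectors $\tilde y_{k+1},\dots,\tilde y_n$ are mutually orthogonal and orthogonal to $x_k$, which in turn gives $\|\tilde y_k\|_2=\dist(x_k,\spn\{x_1,\dots,x_{k-1}\})=d_k$.

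\textbf{Conclusion.} After $n-r$ steps, $P_r=\conv\{\pm x_1,\dots,\pm x_r,\pm\tilde y_{r+1},\dots,\pm\tilde y_n\}$, where $\{\tilde y_{r+1},\dots,\tilde y_n\}$ is an orthogonal system in $\spn\{x_1,\dots,x_r\}^\perp$ with $\|\tilde y_i\|_2=d_i$. There exists an orthogonal transformation of $\R^n$ fixing $\spn\{x_1,\dots,x_r\}$ pointwise and sending $\tilde y_i/d_i$ to $y_i/d_i$ (both form orthonormal bases of the $(n-r)$-dimensional subspace $\spn\{x_1,\dots,x_r\}^\perp$), so by rotational invariance of the Gaussian measure $\gamma_n(P_r)=\gamma_n(P')$. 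Chaining the inequalities, $\gamma_n(P)=\gamma_n(P_n)\leq\cdots\leq\gamma_n(P_r)=\gamma_n(P')$. The one subtle point is the norm identity $\|\tilde y_k\|_2=d_k$, which is the main obstacle to a naive iteration: it holds precisely because the replacements are performed in descending order $k=n,\dots,r+1$, so that each newly introduced $\tilde y_j$ (with $j>k$) lies in $\spn\{x_1,\dots,x_{j-1}\}^\perp$ and hence does not alter the relevant distance $\dist(x_k,\spn\{x_1,\dots,x_{k-1}\})$.
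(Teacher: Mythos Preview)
Your proof is correct and shares the same backbone as the paper's: both build a chain $P=P_n,P_{n-1},\dots,P_r$ by stepping $k$ from $n$ down to $r+1$ and replacing the $k$-th generator by its component orthogonal to the span of the remaining ones, and both rely on the descending order to guarantee $\|\tilde y_k\|_2=d_k$ (because the already-replaced $\tilde y_{k+1},\dots,\tilde y_n$ are orthogonal to $x_k$). The difference is only in the one-step inequality. The paper reflects $P_w$ across the hyperplane $\spn\{y^w_j:j\neq w\}$, observes that the ``perpendicularized'' polytope $P_{w-1}$ is the fiberwise Minkowski average of $P_w$ and its reflection (fibers taken orthogonal to the perpendicular direction), and then invokes log-concavity of the Gaussian measure. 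Your route---parametrize the slices $\{x_n=t\}$ explicitly as a translate $(t/d)v+(1-|t|/d)K$ versus the centered $(1-|t|/d)K$, and compare via Anderson's inequality---reaches the same conclusion more directly and transparently, without having to verify the Minkowski-average identity. Since Anderson's inequality is itself a consequence of log-concavity, the two arguments are close relatives; yours is the cleaner of the two.
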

\begin{proof}
The lemma can be proven inductively on $r$, starting with $r=n$.
Namely, we will construct a sequence of cross-polytopes $P_n,P_{n-1},\dots,P_r$, with
$P_r$ being an orthogonal transformation of $P'$ and $P_n$ an orthogonal transformation of $P$,
such that $\gamma_n(P_n)\leq \gamma_n(P_{n-1})\leq\dots\leq \gamma_n(P_r)$.

At $w$-th step ($r+1\leq w\leq n$, in this argument we step backwards),
let $P_w$ be a cross-polytope generated by $\{y_1^w,y_2^w,\dots,y_n^w\}$ where
$y_i^w$, $i=w+1,\dots,n$ are mutually orthogonal and orthogonal to $\spn\{y_j^w,\;j\leq w\}$.
Denote by $z_w$ the orthogonal projection of $y^w_w$ onto the linear span of $\spn\{y_j^w,\;j< w\}$,
and let $P_w^s$ be the symmetric cross-polytope generated by vectors
$\{y_j^w,j\neq w\}\cup\{z_w+(z_w-y^w_w)\}$.
Observe that $P_w^s$ is an orthogonal transformation of $P_w$
(reflection with respect to hyperplane orthogonal to $z_w-y^w_w$).
Now, let $P_{w-1}$ be generated by vectors $\{y_j^w,j\neq w\}\cup\{z_w-y^w_w\}$.
It is not difficult to see that $P_{w-1}$ can be represented as
$$P_{w-1}:=\Big\{\frac{1}{2}(v_1+v_2):\;v_1\in P_w,\;v_2\in P_w^s,\;\langle v_1,z_w-y_w^w\rangle
=\langle v_2,z_w-y_w^w\rangle\Big\}.$$
This, together with log-concavity of the Gaussian distribution, implies that $\gamma_n(P_{w-1})
\geq\gamma_n(P_w)$, and the result follows.
\end{proof}

As a corollary of the last lemma, we obtain
\begin{lemma}\label{l: gaussian measure simple}
Let $P=\conv\{\pm x_1,\pm x_2,\dots,\pm x_n\}$ be a symmetric cross-polytope in $\R^n$ such that
for some $1\leq r< n$ and some $0<h$ we have
$$\dist(x_i,\spn\{x_j,j<i\})\leq h,\quad\quad i\geq r+1.$$
Then $\gamma_n(P)\leq \big(\frac{eh}{n-r}\big)^{n-r}$.
\end{lemma}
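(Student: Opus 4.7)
The plan is to apply the preceding lemma to reduce to the case of an orthogonal tail, and then bound the Gaussian measure by projecting onto the orthogonal complement of $\spn\{x_1,\dots,x_r\}$.

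First, I would set $d_i := \dist(x_i,\spn\{x_j,j<i\})$ for $i\leq n$, so that by hypothesis $d_i \leq h$ for $i\geq r+1$. Applying the previous lemma (with this same $r$), we obtain a cross-polytope $P'=\conv\{\pm y_1,\dots,\pm y_n\}$ with $y_i=x_i$ for $i\leq r$, with $y_{r+1},\dots,y_n$ mutually orthogonal, orthogonal to $V:=\spn\{x_1,\dots,x_r\}$, and with $\|y_i\|_2=d_i\leq h$. Moreover $\gamma_n(P)\leq\gamma_n(P')$, so it suffices to estimate $\gamma_n(P')$. (A degeneracy in which $x_1,\dots,x_r$ fail to be linearly independent, or some $d_i=0$, can be handled by a routine perturbation argument; the bound is continuous in the generators.)

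Next, let $W:=V^\perp$, an $(n-r)$-dimensional subspace, and let $\Proj_W$ denote orthogonal projection onto $W$. Since $y_1,\dots,y_r\in V$, we have $\Proj_W(P')=\conv\{\pm\Proj_W(y_{r+1}),\dots,\pm\Proj_W(y_n)\}=\conv\{\pm y_{r+1},\dots,\pm y_n\}$ (the last equality since these vectors already lie in $W$). This is a cross-polytope in $W$ whose generators form an orthogonal system of lengths at most $h$; hence $\Proj_W(P')\subset h\,B_1^{n-r}$, where we identify $W\cong\R^{n-r}$ via an orthonormal basis. Using the orthogonal decomposition $\R^n=V\oplus W$ and Fubini for the standard Gaussian measure, each $w$-slice of $P'$ is a subset of $V$ and hence has $\gamma_r$-measure at most $1$, while slices are empty for $w\notin\Proj_W(P')$. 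Therefore
$$\gamma_n(P')\leq\gamma_{n-r}\bigl(\Proj_W(P')\bigr)\leq\gamma_{n-r}(h\,B_1^{n-r}).$$

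Finally, I would estimate the Gaussian measure of the rescaled cross-polytope by comparing with Lebesgue volume: with $k:=n-r$,
$$\gamma_k(h\,B_1^k)\leq (2\pi)^{-k/2}\,\mathrm{vol}(h\,B_1^k)=(2\pi)^{-k/2}\,\frac{(2h)^k}{k!}\leq\Bigl(\frac{eh}{k}\Bigr)^k,$$
where the last inequality uses $k!\geq (k/e)^k$ and $\sqrt{2/\pi}<1$. Combining the three displayed inequalities gives the claimed bound $\gamma_n(P)\leq(eh/(n-r))^{n-r}$.

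The only non-routine step is the reduction via the preceding lemma; everything after that is essentially a projection plus the volumetric estimate for $B_1^k$. The main thing to be careful about is the Fubini step: one must verify that the inclusion $P'\subset V+\Proj_W(P')$ really yields the factor-of-one bound on slices, which follows because $V$ itself has total Gaussian mass one in its ambient $r$-dimensional space.
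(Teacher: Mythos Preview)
Your proof is correct and follows essentially the same route as the paper: reduce via the preceding lemma to an orthogonal tail, observe that the projection of the polytope onto the orthogonal complement of $\spn\{x_1,\dots,x_r\}$ is contained in $hB_1^{n-r}$, and finish with the volumetric bound $(2\pi)^{-k/2}(2h)^k/k!\leq(eh/k)^k$. The paper phrases the projection step as the coordinate inequality $\sum_{i>r}|\widetilde g_i|\leq h$, but this is exactly your Fubini argument in different clothing.
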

\begin{proof}
In view of the previous lemma, we can assume without loss of generality that $x_i=h e_i$, $i\geq r+1$,
and that the linear span of $\{x_j,\;j\leq r\}$ coincides with $\spn\{e_1,\dots,e_r\}$.
Let $\widetilde G=(\widetilde g_1,\dots,\widetilde g_n)$ be the standard Gaussian vector in $\R^n$. Clearly,
the event $\widetilde G\in P$ implies that
$$\sum_{i=r+1}^n |\widetilde g_i|\leq h.$$
Thus,
$$\Prob\{\widetilde G\in P\}\leq (2\pi)^{-(n-r)/2}\frac{(2h)^{n-r}}{(n-r)!}
\leq \frac{(2\pi)^{-(n-r)/2-1/2}(2h)^{n-r}}{(n-r)^{n-r+1/2}e^{r-n}}\leq \bigg(\frac{eh}{n-r}\bigg)^{n-r}.$$
The result follows.
\end{proof}
As an immediate consequence of the last lemma, we get
\begin{lemma}\label{l: crosspol2}
Let $P=\conv\{\pm x_1,\pm x_2,\dots,\pm x_n\}$ be a symmetric cross-polytope
such that for some $h>0$, $\delta\in(0,1/2]$ and $1\leq k\leq n$ we have
$$\big|\big\{i:\;n-k+1\leq i\leq n,\;\dist(x_{i},\spn\{x_{j},\;j<i\})
\leq h\big\}\big|\geq (1-\delta)k.$$
Then
$\gamma_n(P)\leq \big(\frac{2eh}{k}\big)^{(1-\delta)k}$.
\end{lemma}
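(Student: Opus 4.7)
The plan is to derive Lemma~\ref{l: crosspol2} as an almost immediate corollary of Lemma~\ref{l: gaussian measure simple}, whose hypothesis demands the distance bound on the \emph{contiguous} tail of the vector sequence. The obstacle is that here the ``good'' indices
$$J:=\big\{i\in\{n-k+1,\dots,n\}:\;\dist(x_i,\spn\{x_j,\;j<i\})\leq h\big\},\qquad |J|=:k'\geq(1-\delta)k,$$
are merely sparse within the tail block $\{n-k+1,\dots,n\}$.

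The main step I would carry out is a reordering argument. I would relabel the generators $x_1,\dots,x_n$ by placing those indexed by $[n]\setminus J$ first (in any order) and those indexed by $J$ last (in their original relative order), producing a new sequence $y_1,\dots,y_n$. The permutation does not change $P$. For each $i\in J$, at its new position $\ell>n-k'$, the predecessors $\{y_j:\;j<\ell\}$ include every vector indexed by $[n]\setminus J$ together with every $x_j$ with $j\in J$ and $j<i$, so
$$\spn\{x_j:\;j<i\}\subset\spn\{y_j:\;j<\ell\}.$$
Monotonicity of distance to a subspace then yields $\dist(y_\ell,\spn\{y_j:\;j<\ell\})\leq h$ for each of the last $k'$ vectors in the new ordering, so the hypothesis of Lemma~\ref{l: gaussian measure simple} is met with $r:=n-k'$.

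Applying Lemma~\ref{l: gaussian measure simple} to the reordered cross-polytope then gives $\gamma_n(P)\leq(eh/k')^{k'}$, and the remainder is a short case split to bring this into the stated form. If $2eh/k\geq 1$ the target bound is at least $1$ and holds trivially. Otherwise, using $\delta\leq 1/2$ to get $k'\geq k/2$, we have $eh/k'\leq 2eh/k<1$, so raising a number in $(0,1)$ to the smaller exponent $(1-\delta)k\leq k'$ can only increase it, and the base is dominated by $2eh/k$; stringing these together gives $(eh/k')^{k'}\leq(eh/k')^{(1-\delta)k}\leq(2eh/k)^{(1-\delta)k}$. The only conceptual ingredient is the monotonicity-of-span reordering; I do not foresee any genuine obstacle.
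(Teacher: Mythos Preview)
Your proposal is correct and matches the paper's intent: the paper states the lemma as an ``immediate consequence'' of Lemma~\ref{l: gaussian measure simple} without further argument, and your reordering (pushing the good indices $J$ to the tail while preserving their relative order so that spans of predecessors only grow) is exactly the one-line observation that makes this immediate. The case split at the end to pass from $(eh/k')^{k'}$ to $\big(2eh/k\big)^{(1-\delta)k}$ is routine and handled correctly.
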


Estimates analogous to Lemma~\ref{l: crosspol2} are easily 
available for cross-polytopes from $\crosspol(k,h)$.
Indeed, appropriately rearranging the generating vectors, we obtain that for any cross-polytope $P$
from $\crosspol(k,h)$ we have $\gamma_n(P)\leq e^{-c'k}$ as long as $h\leq c'k$, for some universal constant $c'>0$.
However, these estimates (with such a condition on $h$)
are absolutely useless in our context: they are too weak because they do not take into consideration
invariance of $\crosspol(k,h)$ under permutations of the generating vectors.
Applying a more elaborate argument, we can derive a much stronger statement,
which is the core of our approach.

\begin{lemma}[Gaussian measure of tilted cross-polytopes from $\crosspol(k,h)$]\label{l: crosspol1}
Let
$$P=\conv\{\pm x_1,\dots,\pm x_n\}\in\crosspol(k,h),$$
with $1\leq k\leq n$ and $h\leq c_{\text{\tiny\ref{l: crosspol1}}}n$. Then $\gamma_n(P)\leq 2e^{-c_{\text{\tiny\ref{l: crosspol1}}}k}$,
for a universal constant $c_{\text{\tiny\ref{l: crosspol1}}}>0$.
\end{lemma}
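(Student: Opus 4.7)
My strategy is to exploit the permutation-invariance built into the definition of $\crosspol(k,h)$ in order to produce a long sequence of ``near-dependent'' generators of $P$, to which Lemma~\ref{l: gaussian measure simple} can then be applied. The first step is a combinatorial observation: for every $U \subseteq [n]$ with $|U|=k$, at least $k/4$ elements $v \in U$ satisfy $\dist(x_v, \spn\{x_j : j\notin U\}) \leq h$. Indeed, invoking the class definition with any permutation $\sigma$ of $[n]$ mapping $[n-k+1,n]$ onto $U$, one obtains at least $k/4$ positions $i \in [n-k+1,n]$ with $\dist(x_{\sigma(i)}, \spn\{x_{\sigma(j)}: j<i\}) \leq h$; since the first $n-k$ positions of $\sigma$ are exactly $[n]\setminus U$, the subspace $\spn\{x_{\sigma(j)}: j<i\}$ contains $\spn\{x_j: j\notin U\}$, and the distance bound transfers. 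Setting
\[
G := \{v \in [n] : \dist(x_v, \spn\{x_j: j \neq v\}) \leq h\},
\]
one concludes $|G| \geq n-k+1$: if $|G^c| \geq k$ one could apply the previous observation to some $U \subseteq G^c$ with $|U|=k$, producing $v \in U \subseteq G^c$ with $\dist(x_v, \spn\{x_j: j\notin U\}) \leq h$; but then $\spn\{x_j: j\notin U\} \subseteq \spn\{x_j: j\neq v\}$ forces $\dist(x_v, \spn\{x_j:j\neq v\}) \leq h$, contradicting $v \in G^c$.

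I would then split into two regimes (the case $k=1$ is trivial). When $k \leq n/2$ I would pick any $S \subseteq G$ with $|S|=n-k+1$ and relabel the generators of $P$ so that $S$ occupies positions $k,k+1,\ldots,n$; for each $v \in S$ the preceding-span at its new position is contained in $\spn\{x_j: j \neq v\}$, so membership of $v$ in $G$ gives Gram--Schmidt distance $\leq h$. Lemma~\ref{l: gaussian measure simple} with $r=k-1$ then yields $\gamma_n(P) \leq (eh/(n-k+1))^{n-k+1}$; for $c_{\text{\tiny\ref{l: crosspol1}}}$ small and $h \leq c_{\text{\tiny\ref{l: crosspol1}}} n$ the base is at most $1/2$, and since $n-k+1 \geq k$ in this regime this is at most $2e^{-c_{\text{\tiny\ref{l: crosspol1}}} k}$. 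When $k > n/2$ I would instead apply the class definition to the identity permutation, extracting indices $i_1<\cdots<i_m$ in $[n-k+1,n]$ with $m \geq k/4$ and $\dist(x_{i_\ell}, \spn\{x_j: j<i_\ell\}) \leq h$. Relabelling so that $i_1,\ldots,i_m$ occupy the last $m$ positions (in this order), while the remaining indices retain their relative order earlier, a direct index check shows that the new preceding-span at each $i_\ell$ still contains $\{x_j: j<i_\ell\}$, so the distance estimate is preserved. Lemma~\ref{l: gaussian measure simple} with $r = n-m$, combined with the fact that $(eh/m)^m$ is decreasing in $m$ for $m \geq h$ (valid since $h < k/4$ in this regime), yields $\gamma_n(P) \leq (4eh/k)^{k/4}$; the condition $h < 2 c_{\text{\tiny\ref{l: crosspol1}}} k$ then delivers $\gamma_n(P) \leq 2 e^{-c_{\text{\tiny\ref{l: crosspol1}}} k}$.

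The main obstacle is the first step, extracting the global structural bound $|G| \geq n-k+1$ out of the ``for every permutation $\sigma$'' quantifier. This is the only place where the full strength of $\crosspol(k,h)$ is essential; without it one would be left with merely $k/4$ close-to-previous indices per permutation, a ``bad fraction'' $\delta = 3/4$ exceeding the $\delta \leq 1/2$ threshold required by Lemma~\ref{l: crosspol2}, which (as emphasized in the discussion preceding this lemma) yields only trivial bounds in the regime where $h$ is comparable to $n$. Once $|G|$ is known to be large the two-case split above is essentially arithmetic bookkeeping, converting the structural information into the claimed Gaussian-measure bound via Lemma~\ref{l: gaussian measure simple}.
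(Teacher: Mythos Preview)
Your combinatorial extraction of the set $G=\{v:\dist(x_v,\spn\{x_j:j\neq v\})\le h\}$ and the bound $|G|\ge n-k+1$ are correct, and your treatment of the regime $k>n/2$ is fine. The gap is in the regime $k\le n/2$. There you write that for $v\in S$ placed at some position $i\in\{k,\dots,n\}$, ``the preceding-span at its new position is contained in $\spn\{x_j:j\neq v\}$, so membership of $v$ in $G$ gives Gram--Schmidt distance $\le h$.'' But the inclusion goes the wrong way for distances: if $A\subset B$ then $\dist(x,A)\ge\dist(x,B)$, so from $\dist(x_v,\spn\{x_j:j\neq v\})\le h$ you get a \emph{lower} bound on the Gram--Schmidt distance, not an upper bound. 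The information encoded in $G$ is about the distance to the full $(n-1)$--dimensional complementary span; it says nothing about the distance to a much smaller preceding span.

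This is not repairable by a different ordering of $S$. Take $x_i=e_i$ for $i<n$ and $x_n=e_1+\dots+e_{n-1}+he_n$, with $h$ small; one checks that $P\in\crosspol(2,h)$ and that $G=[n]$. Yet in \emph{any} ordering, $x_n$ is the only generator whose Gram--Schmidt distance can be $\le h$ (indeed $x_n$ must sit in the last position for this), while every $e_i$ placed at a position $\le n-1$ has Gram--Schmidt distance $\ge 1/\sqrt{2}$. So one cannot produce $n-1$ consecutive small Gram--Schmidt distances, and your route to Lemma~\ref{l: gaussian measure simple} is blocked. More generally, the datum ``$|G|$ is large'' controls only the \emph{last} Gram--Schmidt distance in any order, which is far too little when $h$ is of order $n$ and $k$ is small.

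The paper's argument bypasses this obstruction by a different mechanism. It does not try to find many small Gram--Schmidt distances in $P$ itself. Instead, writing $\widetilde G=\sum_i v_i x_i$ on the event $\{\widetilde G\in P\}$, an averaging over the random coefficients $v$ produces a \emph{fixed} index set $I$ of size $k$ with $\sum_{i\in I}|v_i|\le 2k/n$ on an event of probability $\ge\gamma_n(P)/2$. One then \emph{shrinks} the generators indexed by $I$ by the factor $k/n$, obtaining a new cross-polytope $P'$ with $\gamma_n(P')\ge\gamma_n(P)/2$. Now the $\crosspol(k,h)$ hypothesis, applied to the permutation placing $I$ last, yields $k/4$ generators of $P'$ whose Gram--Schmidt distances are at most $4kh/n$ (not $h$), and Lemma~\ref{l: gaussian measure simple} gives $\gamma_n(P')\le(16eh/n)^{k/4}$, which is what makes the condition $h\le c n$ (rather than $h\le c k$) sufficient. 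The shrinking step is precisely the missing idea in your $k\le n/2$ case.
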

\begin{proof}
Let $\widetilde G$ be the standard Gaussian vector in $\R^n$,
and let $\widetilde \Event$ be the event that $\widetilde G\in P$.
Denote
$q:=\gamma_n(P)=\Prob(\widetilde \Event)$.
Conditioned on $\widetilde \Event$, there is a (random) vector $v=(v_1,\dots,v_n)\in\R^n$ with $\|v\|_1\leq 1$
such that
$$\widetilde G=\sum\limits_{i=1}^n v_i x_i.$$
As $\Exp\big(\sum\limits_{i=1}^n |v_i|\;|\;\widetilde \Event\big)\leq 1$,
there is a non-random subset $I\subset[n]$ of cardinality $k$
such that $\Exp\big(\sum\limits_{i\in I} |v_i|\;|\;\widetilde \Event\big)\leq k/n$,
whence, by Markov's inequality, there is an event $\Event'\subset\widetilde\Event$
of probability at least $q/2$ such that
$$\sum\limits_{i\in I} |v_i|\leq\frac{2k}{n}$$
everywhere on $\Event'$.
Without loss of generality (since the definition of the class $\crosspol$ is invariant under permutations
of generating vectors), we can assume that $I=\{n-k+1,\dots,n\}$.
Now, for any $\omega\in\Event'$ we have
$$\widetilde G(\omega)=\sum\limits_{i=1}^n v_i(\omega) x_i=4\sum\limits_{i=1}^{n-k} \frac{v_i(\omega)}{4} x_i
+4\sum\limits_{i=n-k+1}^{n} \frac{nv_i(\omega)}{4k} \Big(\frac{k}{n}x_i\Big),$$
where, setting $v_i':=\frac{v_i}{4}$ for $i\leq n-k$
and $v_i':=\frac{nv_i}{4k}$ for $i>n-k$, we get $\|(v_1',\dots,v_n')\|_1\leq 1$.
Thus, if we define non-random cross-polytope $P'$ as
$$P':=4\,\conv\big\{\pm x_1,\dots,\pm x_{n-k},\pm kx_{n-k+1}/n,\dots,\pm kx_n/n\big\},$$
then
$$\Prob\{\widetilde G\in P'\}\geq q/2.$$
On the other hand, appropriately rearranging the generating vectors and using the definition of the class
$\crosspol(k,h)$, we get that there is a presentation
$$P':=\conv\{\pm y_1,\dots,\pm y_n\}$$
where for the last $\lceil k/4\rceil$ vectors $y_i$ we have
$$\dist(y_i,\spn\{y_j,j<i\})\leq \frac{4k h}{n},\quad\quad i=n-\lceil k/4\rceil+1,\dots,n.$$
Finally, applying Lemma~\ref{l: gaussian measure simple} to $P'$, we obtain
$$q/2\leq e^{-c'k},$$
for a universal constant $c'>0$.
\end{proof}

\section{Completion of the proof}\label{s: completion}

We assume that $n$ is a large positive integer, and set
$\varepsilon:=n^{-3}$; $m:=n^3$.
Although parameter $\rho$ will be determined at the very last stage, we assume that $\rho\leq n$,
whence $\varepsilon \rho n^2\leq 1$. Thus, applying Lemma~\ref{l: basic discretization},
we get that, in order to prove \eqref{eq: basis},
it is sufficient to show that the event $\Event_{\text{\tiny\ref{l: basic discretization}}}$ from Lemma~\ref{l: basic discretization}
has probability strictly less than $1-2^{-n}$.

Let $\delta:=\log^{-1} n$ and let parameters $s,\widetilde s,\tau,\alpha$ satisfy
\begin{equation}\label{eq: condit}
n\geq s\geq 4\widetilde s\geq 4\log^2 n;\quad\quad \frac{\widetilde s^2\alpha}{n}\geq C_{\text{\tiny\ref{l: parameters II}}}\log n;\quad\quad
\min\bigg(\frac{\tau^2 \widetilde s^2\alpha}{n},\frac{\tau^2 \widetilde s}{n}\bigg)\geq C_{\text{\tiny\ref{l: parameters I}}}\log^2 n.
\end{equation}
Let $\Event_A=\Event_A^1(s,\widetilde s)\cap\Event_A^2(\widetilde s,\delta,\tau)$ ($A\in\Net$) and $\Event_\cap:=\bigcap\limits_{A\in\Net}\Event_A$
be defined as in Section~\ref{s: event}.

Lemmas~\ref{l: parameters I} and~\ref{l: parameters II}, together with assumptions \eqref{eq: condit}, imply that
$\Prob(\Event_\cap)\geq 1-\frac{2}{n}$. Hence, applying Lemma~\ref{l: decomposition},
we get
\begin{align*}
\Prob(\Event_{\text{\tiny\ref{l: basic discretization}}})\leq
|\Net|\max\limits_{A\in\Net}\sup\limits_{\omega\in\Event_A}
\Prob\big(\big\{&\omega'\in\Omega:\;\mbox{For some $I\subset[2n]$ with $|I|=n$ we have}\\
&\mbox{$\widetilde G(\omega')\in 4\rho\,\big(\conv\big\{\pm\col_i(\Gamma(\omega)\Fcomp(A)),\;\;i\in I\big\}\big)$}\;\big\}\big)^{m-n^2}
+\frac{2}{n}.
\end{align*}
It is not difficult to see that, with our choice of $m,\varepsilon$, the cardinality of the set $\Net$ of matrices
can be estimated as
$$|\Net|\leq \bigg({m\choose n}(\varepsilon/3)^{-n}\bigg)^n\leq e^{Cn^2\log n}$$
for an appropriate universal constant $C>0$.
Thus, if we show that (for a specific choice of parameters) for every matrix $A\in\Net$ and $\omega\in\Event_A$ we have
\begin{align}
\Prob\big(\big\{&\omega'\in\Omega:\;\mbox{For some $I\subset[2n]$ with $|I|=n$ we have}\nonumber\\
&\mbox{$\widetilde G(\omega')\in 4\rho\,\big(\conv\big\{\pm\col_i(\Gamma(\omega)\Fcomp(A)),\;\;i\in I\big\}\big)$}\;\big\}\big)\leq 1/2,
\label{eq: indivprob}
\end{align}
this would imply that $\Prob(\Event_{\text{\tiny\ref{l: basic discretization}}})$ is close to zero, and we will obtain \eqref{eq: basis}.
For the rest of the proof, we are concerned with finding values for parameters so that
condition \eqref{eq: indivprob} is satisfied.

\bigskip

The definition of the event $\Event_A$ implies that for every $\omega\in\Event_A$
and $I\subset[2n]$, the cross-polytope $T:=\conv\big\{\pm\col_i(\Gamma(\omega)\Fcomp(A)),\;\;i\in I\big\}$
satisfies one of the three conditions:
\begin{itemize}

\item $T$ is degenerate, whence $\Prob\{\omega'\in\Omega:\;\widetilde G(\omega')\in 4\rho\,T\}=0$;

\item $T\in\crosspol(s,C_{\text{\tiny\ref{l: dist to spans I}}}\sqrt{2s})$ (if $|I\cap[n]|\geq n-\widetilde s$);

\item $|I\cap[n]|< n-\widetilde s$ and
\begin{align*}
&\big|\big\{i\in I\setminus[n]:\,\dist(\col_i(\Gamma(\omega) \Fcomp(A)),\spn\{\col_j(\Gamma(\omega) \Fcomp(A)),\;j\in I\cap[i-1]\})
\\
&\leq \tau\sqrt{\alpha|I\setminus[n]|}\big\}\big|\geq (1-\delta)|I\setminus[n]|.
\end{align*}
\end{itemize}

Observe that for each $p=0,1,\dots,n$, there are ${n\choose p}^2$ ways to choose
a subset $I\subset[2n]$ of cardinality $n$ and with $|I\cap[n]|=p$.
Thus, in order to satisfy \eqref{eq: indivprob}, it is sufficient to have
\begin{align*}
&\sum\limits_{p=0}^{n-\widetilde s-1}
{n\choose p}^2\sup\big\{\Prob\big\{\omega':\;\widetilde G(\omega')\in 4\rho\,P\big\}\;,\;P\in \mathcal U\big\}\\
+&\sum\limits_{p=n-\widetilde s}^{n}{n\choose p}^2
\sup\big\{\Prob\big\{\omega':\;\widetilde G(\omega')\in 4\rho\,P\big\}\;,\;P\in \crosspol(s,C_{\text{\tiny\ref{l: dist to spans I}}}\sqrt{2s})\big\}\leq \frac{1}{2}.
\end{align*}
Here, $\mathcal U$ denotes the collection of all cross-polytopes $P=\conv\{\pm x_1,\pm x_2,\dots,\pm x_n\}$
such that $\big|\big\{i:\;p+1\leq i\leq n,\;\dist(x_{i},\spn\{x_{j},\;j<i\})
\leq \tau\sqrt{\alpha(n-p)}\big\}\big|\geq (1-\delta)(n-p)$.
For any $p\leq n-\widetilde s-1$ and $P\in \mathcal U$,
we get, by Lemma~\ref{l: crosspol2}, that
$$\Prob\big\{\widetilde G\in 4\rho\,P\big\}\leq \bigg(\frac{8e\rho \tau\sqrt{\alpha(n-p)}}{n-p}\bigg)^{(1-\delta)(n-p)}.$$
Further, for $p\geq n-\widetilde s$ and $P\in \crosspol(s,C_{\text{\tiny\ref{l: dist to spans I}}}\sqrt{2s})$
we obtain, by Lemma~\ref{l: crosspol1},
$$\Prob\big\{\widetilde G\in 4\rho\,P\big\}\leq 2e^{-c_{\text{\tiny\ref{l: crosspol1}}}s},$$
{\it provided that} $4C_{\text{\tiny\ref{l: dist to spans I}}}\rho\sqrt{2s}\leq c_{\text{\tiny\ref{l: crosspol1}}}n$.

Summarizing, condition \eqref{eq: indivprob} is satisfied whenever we have a set of parameters $s,\widetilde s,\tau,\alpha,\rho$
satisfying conditons \eqref{eq: condit}, together with condition $4C_{\text{\tiny\ref{l: dist to spans I}}}\rho\sqrt{2s}\leq c_{\text{\tiny\ref{l: crosspol1}}}n$ and
condition
$$\sum\limits_{p=0}^{n-\widetilde s-1}
{n\choose p}^2\bigg(\frac{8e\rho \tau\sqrt{\alpha(n-p)}}{n-p}\bigg)^{(1-\delta)(n-p)}
+2\sum\limits_{p=n-\widetilde s}^{n}{n\choose p}^2
e^{-c_{\text{\tiny\ref{l: crosspol1}}}s}\leq \frac{1}{2}.$$
It is not difficult to see that 
$$\sum\limits_{p=n-\widetilde s}^{n}{n\choose p}^2
e^{-c_{\text{\tiny\ref{l: crosspol1}}}s}\leq (\widetilde s+1)\bigg(\frac{en}{\widetilde s}\bigg)^{2\widetilde s}e^{-c_{\text{\tiny\ref{l: crosspol1}}}s}
\leq \frac{1}{n},$$
as long as $s\geq C\widetilde s\log n$ for a sufficiently large universal constant $C>0$.
Further,
$$\sum\limits_{p=0}^{n-\widetilde s-1}
{n\choose p}^2\bigg(\frac{8e\rho \tau\sqrt{\alpha(n-p)}}{n-p}\bigg)^{(1-\delta)(n-p)}\leq \frac{1}{n}$$
as long as $\frac{n^2\rho\tau\sqrt{\alpha}}{\widetilde s^{5/2}}\leq c$
for a sufficiently small universal constant $c>0$.
Thus, condition \eqref{eq: indivprob} is satisfied whenever our set of parameters satisfies the relations
\begin{align*}
&n\geq s\geq C\widetilde s\log n\geq 4C\log^3 n;\quad\quad
4C_{\text{\tiny\ref{l: dist to spans I}}}\rho\sqrt{2s}\leq c_{\text{\tiny\ref{l: crosspol1}}}n;
\quad\quad \frac{\widetilde s^2\alpha}{n}\geq C_{\text{\tiny\ref{l: parameters II}}}\log n;\\
&\min\bigg(\frac{\tau^2 \widetilde s^2\alpha}{n},\frac{\tau^2 \widetilde s}{n}\bigg)\geq C_{\text{\tiny\ref{l: parameters I}}}\log^2 n;
\quad\quad\frac{n^2\rho\tau\sqrt{\alpha}}{\widetilde s^{5/2}}\leq c.
\end{align*}
Our goal is to find the largest possible $\rho$ so that the above conditions can be satisfied.
Setting
$$\tau:=\sqrt{C_{\text{\tiny\ref{l: parameters I}}}}\log n\max\big(\sqrt{n/\widetilde s},\sqrt{n/(\widetilde s^2\,\alpha)}\big),$$
we get that we can take
$$\rho:=c'\min\Big(n/\sqrt{s},\frac{1}{\log n}\frac{\widetilde s^3}{n^{5/2}\sqrt{\alpha}},
\frac{1}{\log n}\frac{\widetilde s^{7/2}}{n^{5/2}}\Big)$$
for a small enough universal constant $c'>0$.
Since it is better for us to take $\alpha$ as small as possible, we can take $\alpha:=\frac{C'n\log n}{\widetilde s^2}$
for a large enough universal constant $C'>0$.
Plugging in and optimizing over $\widetilde s$, $s$ (up to logarithmic multiples, we should take $s$, $\widetilde s$
of order $n^{8/9}$), we get the result.

\bigskip

\noindent
{\bf Acknowledgement.}
I would like to thank Assaf Naor and Pierre Youssef for introducing me to
recent results on restricted invertibility of matrices, in connection with
the Banach--Mazur distance to the cube,
Assaf Naor for suggesting to work on the problem,
Alexander Litvak and Nicole Tomczak-Jaegermann for valuable remarks.

\end{document}